\documentclass[12pt]{article}
\usepackage{graphicx} 
\usepackage{tikz}
\usepackage[
  pdftex,% For direct pdf compilation compatibility
  plainpages=false,% Usage of not only arabic numbers
  pdfpagelabels,% Enable page labels
  colorlinks,% Colored links
  citecolor=magenta,% Citation color
  linkcolor=blue,% Link color
  urlcolor=black,% URL color
  filecolor=black,% File color
  backref=page,
  colorlinks=true,
  bookmarksopen% For starting document with bookmarks tree opened
]{hyperref} %For pdf links

\usepackage{amssymb, amsfonts, amsmath, amsthm, enumerate}
\usepackage[margin=1in]{geometry}
\usepackage{tikz,etoolbox}
\usepackage{mathtools}
\usepackage[toc,page]{appendix} 
\usepackage{bbm}
\usepackage{bm}
\usetikzlibrary{matrix,calc,shapes,decorations}
\pgfdeclarelayer{background}
\pgfdeclarelayer{main}
\pgfdeclarelayer{foreground}
\pgfsetlayers{background,main,foreground}

\usepackage{graphicx}
\usepackage{float}
\usetikzlibrary{calc}
\usepackage{xcolor}
\newcommand{\icol}[1]
{                 % inline column vector
\left(\begin{smallmatrix}#1\end{smallmatrix}\right)
}    
\newcommand{\C}{\mathbb{C}}

\newcommand{\Z}{\mathbb{Z}}
\newcommand{\R}{\mathbb{R}}
\newcommand{\Q}{\mathbb{Q}}
\renewcommand{\phi}{\varphi}

\DeclareMathOperator*{\lcm}{lcm}
\DeclareMathOperator*{\conv}{conv}

\renewcommand{\P}{{\mathcal P}}

\newcommand{\va}{{\mathbf a}}

\newcommand{\vol}{\operatorname{vol}}

\newcommand{\K}{\mathcal{K}}

\newtheorem{observation}{Observation}
\newtheorem{cor}{Corollary}
\newtheorem{proposition}{Proposition}
\newtheorem{lemma}{Lemma}
\newtheorem{conjecture}{Conjecture}

\newtheorem{eg}{Example}
\newenvironment{example}{\begin{eg}\rm}{\end{eg}}
\newtheorem{problem}{Problem}

\newtheorem{definition}{Definition}
\newtheorem{remark}{Remark}
\usepackage{thmtools}
\usepackage{thm-restate}
\usepackage{cleveref}
\declaretheorem[name=Theorem]{thm}

\newcommand{\smallpolytope}{%
  \mathord{\vcenter{\hbox{%
  \begin{tikzpicture}[scale=0.3, line width=0.4pt]
    \draw (0,0) -- (1,0) -- (1,1) -- (0,1) -- cycle;
    \draw (0.35,0.35) -- (1.35,0.35) -- (1.35,1.35) -- (0.35,1.35) -- cycle;
    \draw (0,0) -- (0.35,0.35);
    \draw (1,0) -- (1.35,0.35);
    \draw (1,1) -- (1.35,1.35);
    \draw (0,1) -- (0.35,1.35);
  \end{tikzpicture}}}}%
}
\title{Half-open integer parallelepipeds and polytope Dedekind sums}

\author{
Sinai Robins\thanks{Gratefully acknowledges the support given by the São Paulo Research Foundation (FAPESP), Grant No.~2023/03167-5.} \and
André Rosenbaum Coelho\thanks{Corresponding author. He gratefully acknowledges the support given by the São Paulo Research Foundation (FAPESP), Grant No.~2025/06117-4.}
}

\date{July 17, 2026}
\begin{document}
\maketitle

\footnotetext[1]{2020 Mathematics Subject Classification.
Primary 52B20; Secondary 05A15, 05E45, 11F20.}
\footnotetext[2]{Keywords: Ehrhart quasi-polynomials, half-open parallelepipeds,
polytope Dedekind sums, Barnes polynomials, lattice-point enumeration}
\begin{abstract}
We study the Ehrhart theory of half-open $d$-dimensional integer parallelepipeds $\Pi$.  Although the lattice-point count 
$t\Pi\cap \Z^d$ is known to be simply 
\(\vol \Pi t^d\) for positive integer \(t\), the corresponding counting function for arbitrary real dilations $t$ has subtle, nontrivial periodic structure. We give explicit formulas for this real Ehrhart quasi-polynomial, and more generally for all the discrete moments of the real dilates of $\Pi$: $\sum_{p\in t\Pi\cap\mathbb Z^d}\langle p,z\rangle^m$.
The formulas are expressed in terms of Barnes polynomials and polytope Dedekind sums, which encode the periodic lattice flow of translated integer lattices on the flat torus determined by \(\Pi\). Our approach  develops further the study of polytope Dedekind sums, introduced recently in \cite{Robins2026}. In particular, we obtain novel identities for polytope Dedekind sums by using iterated discrete derivatives. Moreover, we show that the Ehrhart quasi-coefficients of $L_\Pi(t)$ are precisely alternating sums of polytope Dedekind sums.  Finally, we give an Ehrhart-type reciprocity law relating \(L_{\Pi}(t)\) at negative arguments to the lattice-point count of the `opposite' half-open parallelepiped.
\end{abstract}

\bigskip
% \noindent\textbf{Acknowledgments.}
% The authors gratefully acknowledge the financial support provided by the São Paulo Research Foundation (FAPESP). André Rosenbaum Coelho was supported by FAPESP Grant No.~2025/06117-4, and Sinai Robins was supported by FAPESP Grant No.~2023/03167-5.

\newpage
\tableofcontents

\medskip

%%%%%%%%%%%%%%%%

%%%%%%%%%%%%%%%%
\section{Introduction and statements of the main results}  \label{sec: intro}

The polytope Dedekind sums were recently introduced in \cite{Robins2026}, where it was shown that they are the workhorse of the Ehrhart quasi-polynomials for all rational polytopes. 

Here we develop further these polytope Dedekind sums. 
Our main geometric object of study is the half-open integer parallelepiped, which we now define. 
Let $\bm w_1,\dots,\bm w_d\in\Z^d$ be $d$ linearly independent primitive integer vectors and $W$ be the $d\times d$ integer matrix whose columns are
$\bm w_1,\ldots,\bm w_d$. 
Since these vectors are linearly independent, $\det W\neq 0$.
The\textbf{ half-open parallelepiped} $\Pi$ may be written in $W$-coordinates as $\Pi=W[0,1)^d$. Equivalently,
\begin{equation}
\label{eq: half-open parallelepiped}
    \Pi=\Big\{\sum_{i=1}^d\lambda_i\bm{w}_i:\lambda_i\in[0,1)\Big\}.
\end{equation}

It is well-known that for positive 
{\bf integer} values of $t$, the Ehrhart quasi-polynomial $L_{\Pi}(t)\coloneq |t\Pi\cap\Z^d|$ is given by
\begin{equation}
    L_\Pi(t) = \vol \Pi \, t^d,
\end{equation}
due to the fact that $\Pi$ tiles $\R^d$ by translations with the generating vectors $\bm w_1,\dots,\bm w_d$. More generally, 
we call a function $f: \R \longrightarrow \C$  a  \textbf{real quasi-polynomial} if 
\begin{equation}
    f(t) = \sum_{k=0}^d c_k(t)t^k,
\end{equation}
where the coefficients $c_k(t)$ are periodic functions of the \textbf{real-valued} parameter $t$.  
% It is well known that the denominator $d(\mathcal P)$ of a rational polytope, i.e., the smallest integer $n$ such that $n\mathcal P$ is integral, is a quasi-period for the classical Ehrhart quasi-polynomial $L_{\mathcal P}(t)$, with $t\in\Z_{\ge0}$. However, there may exist $n<d(\mathcal P)$ such that $n$ is a common period of the quasi-coefficients $c_r(t)$. This phenomena is traditionally called \textbf{period collapse} in the literature. Here, we extend the period investigation for real Ehrhart quasi-polynomials of half-open integer parallelepipeds. 
In the present paper, we address the following problems:

\begin{problem}
\label{endless problem of relations for polytope Dedekind sums}
Give relations between various polytope Dedekind sums
$\mu_k(\Pi,\bm z,t\bm v)$. 
\end{problem}

\begin{problem}
\label{question: all coeff's of the quasi-polynomial for a half-open parallelepiped}
Given any
    {\bf positive real} value of $t$, find an explicit description for all  the coefficients of the real quasi-polynomial 
    $L_\Pi(t)$.
\end{problem}

\begin{problem}
\label{general question: all coeff's of the quasi-polynomial for the m'th moment}
More generally, for 
    {\bf positive real} values of $t$, and any positive integer $m$, give an explicit description for the $m$th discrete moment:
\begin{equation}
    \sum_{\bm p \,\in \,
t \, \Pi \,\cap \,\Z^d} 
\langle \bm p, \bm z \rangle^m.
\end{equation}
\end{problem}

\noindent
In some sense the open direction of Problem \ref{endless problem of relations for polytope Dedekind sums} will continue to see new solutions in the future.  We give one answer to this problem, in Theorem \ref{thm:partial-alternating-differences}.
Problem \ref{question: all coeff's of the quasi-polynomial for a half-open parallelepiped}  has a surprisingly ``simple'' and explicit answer, in terms of polytope Dedekind sums. 
We solve the more general Problem \ref{general question: all coeff's of the quasi-polynomial for the m'th moment} positively, in Theorem 
\ref{thm: moments of dilated half-open parallelepipeds},
part \eqref{thm1:part a} below.

The paper is organized as follows. In Section \ref{sec: intro}, we introduce half-open integer parallelepipeds, their real Ehrhart quasi-polynomials, the associated polytope Dedekind sums, and the Barnes polynomials that appear in our formulas. We then state the main results. Section \ref{sec: dimension two} works out a two-dimensional example in detail, illustrating how the general formula produces an explicit real Ehrhart quasi-polynomial. 
In Section \ref{sec: Ehrhart quasipoly via Barnes and pDs}, we describe the moments of a half-open integer parallelepiped, state the second formula for its Ehrhart quasi-polynomial in terms of Barnes polynomials and polytope Dedekind sums and show how this formula implies the more elementary one, given in Theorem \ref{thm: Ehrhart quasi-polynomial in terms of representatives of the finite abelian group}.
Interestingly, Theorem~\ref{thm:partial-alternating-differences} and Proposition \ref{prop:finite-difference-Bernoulli-Barnes} enjoy an interpretation in the language of iterated discrete derivatives, which we give in Section~\ref{sec: finite difference operators}.  Sections \ref{sec: proof of lemmas} to \ref{sec: proof of theorem moments} contain the proofs of the preliminary lemmas and main theorems. Finally, in  Section \ref{sec: further remarks} we discuss some open problems concerning the periodicity and cancellation phenomena that arise from the lattice-flow interpretation of Ehrhart quasipolynomials.

%%%%%%%%%%%%%%%
\subsection{Our setup}

Our main analytic object of study is the following finite sum, which we will later show can be written as a certain sum over a finite abelian group. 
\begin{definition}
\label{discrete moments}
The {\bf polytope Dedekind sum}
of a half-open integer parallelepiped $\Pi \subset \R^d$ is defined by:
\begin{equation}
\label{def:polytope Dedekind sum}
\mu_k(\Pi, \bm z, t\bm v):= 
\sum_{\bm p \in  
   \Z^d - t\bm v \; (  \hspace{-.1cm}   \bmod \Pi) 
   }    
\langle \bm p, \bm z \rangle^k,
\end{equation}
for each fixed integer $k\geq 0$, 
$\bm v \in \Q^d$, $t\in \R$ and $\bm z\in\C^d$.
\hfill 
\scalebox{.8}{$\smallpolytope$}
\end{definition}

\noindent It is also fruitful to regard
\eqref{def:polytope Dedekind sum}
as the {\bf $k$th discrete moment} of $\Pi$.
From this point of view, the sum is not merely an arithmetic object,
but also a measure of how the lattice points of $\Pi$ are distributed in space;
in this sense it belongs naturally to the language of discrete spatial
statistics.
 
 We begin by recalling some of the setup from \cite{Robins2026}. Fix a direction
\(\bm v \in \mathbb R^d\). For each \(t \in \mathbb R\), consider the translated
integer lattice \(\mathbb Z^d - t\bm v\). 
Reducing this translated lattice modulo
the parallelepiped \(\Pi\), we obtain the lattice flow on the flat torus
determined by \(\Pi\):
\begin{equation}
\operatorname{LatticeFlow}(t)
:=
\mathbb Z^d - t\bm v
\pmod{\Pi}.
\label{eq:lattice-flow}
\end{equation}
When the relevant vertices are rational points, \(\bm v \in \mathbb Q^d\), the
flow is periodic. Thus, as \(t\) ranges over sufficiently large values, the
resulting motion decomposes topologically into a finite collection of closed
geodesics on the flat torus. Each such geodesic is the orbit of a single
integer point of \(\Pi\).  Therefore the polytope Dedekind sums \eqref{def:polytope Dedekind sum} are periodic functions of $t>0$. 

Indeed, because $\bm v=\left(\frac{a_1}{b_1}, \dots, \frac{a_d}{b_d} \right)$ is a rational point, we note that
the period of the polytope Dedekind sum is at most $\lcm(b_1, \dots, b_d)$. 
In particular, when $\bm v \in \Z^d$, and $t$ is also an integer, then 
\[
\mu_k(\Pi, \bm z, t\bm v):=
\mu_k(\Pi, \bm z, 0). 
\]
When $t=0$, we will often write  $\mu_k(\Pi, \bm z, 0):=
\mu_k(\Pi, \bm z)$, a polytope Dedekind sum that no longer has any periodic behavior in $t$.
The {\bf classical Dedekind sum} is defined for any two coprime integers $a, b$ by
\begin{equation}
    s(a,b):=\sum_{k=1}^{b-1}
\left(
\left\{
\frac{ak}{b} 
\right\}-\frac{1}{2}
\right)
\left(
\left\{
\frac{k}{b} 
\right\}-\frac{1}{2}
\right).
\end{equation}

This function arises naturally in discrete geometry, modular forms, the variance of random number generators, signatures of 4-manifolds, special values of L-functions, permutation statistics, and many more areas of mathematics.
They possess a reciprocity law which allows us to compute them in linear-time complexity as a function of their bit input, which is $\log a + \log b$.
%%%%%%%
The polytope Dedekind sums directly extend the classical Dedekind sums and their reciprocity law, as shown in \cite[Appendix~A]{Robins2026}.

\begin{figure}[H]
\begin{center}
\begin{tikzpicture}[
    scale=1.5,
    line join=round,
    line cap=round
]

%==================================================
% Vertices
%==================================================
\coordinate (O)  at (0,0);

% Outer vectors
\coordinate (W1) at (2.3, 0.5);
\coordinate (W2) at (-.9, 1);
\coordinate (W3) at (0.85, 1.7);

% Remaining vertices
\coordinate (W12)  at ($(W1)+(W2)$);
\coordinate (W13)  at ($(W1)+(W3)$);
\coordinate (W23)  at ($(W2)+(W3)$);
\coordinate (W123) at ($(W1)+(W2)+(W3)$);

%==================================================
% Colored facets
%==================================================

% <w1,w2>
\fill[red!35!white,opacity=.55]
    (O) -- (W1) -- (W12) -- (W2) -- cycle;

% <w1,w3>
\fill[blue!35!white,opacity=.55]
    (O) -- (W1) -- (W13) -- (W3) -- cycle;

% <w2,w3>
\fill[green!35!white,opacity=.55]
    (O) -- (W2) -- (W23) -- (W3) -- cycle;

%==================================================
% Solid edges from the origin
%==================================================
\draw[thick] (O) -- (W1);
\draw[thick] (O) -- (W2);
\draw[thick] (O) -- (W3);

%==================================================
% Remaining box edges dotted
%==================================================
\draw[thick, loosely dotted] (W1) -- (W12);
\draw[thick,loosely dotted] (W2) -- (W12);

\draw[thick,loosely dotted] (W1) -- (W13);
\draw[thick,loosely dotted] (W3) -- (W13);

\draw[thick,loosely dotted] (W2) -- (W23);
\draw[thick,loosely dotted] (W3) -- (W23);

\draw[thick, loosely dotted] (W12) -- (W123);
\draw[thick,loosely dotted] (W13) -- (W123);
\draw[thick,loosely dotted] (W23) -- (W123);

%==================================================
% Vertex spheres
%==================================================

% Black sphere at the origin
\fill[black] (O) circle (2.5pt);

% Empty spheres at the other seven vertices
\foreach \P in {W1,W2,W3,W12,W13,W23,W123}
{
    \fill[white] (\P) circle (2.3pt);
    \draw[thick] (\P) circle (2.3pt);
}

%==================================================
% Labels
%==================================================
\node[below left]  at (O) {$0\coloneq v_{\varnothing}$};

\node[below right] at (W1) {$\, w_1$};
\node[left]        at (W2) {$w_2 \ $};
\node[above right] at (W3) 
{$ \hspace{-2mm}w_3$};

\node[right] at (W13)
{$\ w_1+w_3 = v_{\{1,3\}}$};

\node[above right] at (W123)
{$w_1+w_2+w_3\coloneq v_{\{1,2,3\}}$};
\end{tikzpicture}
\caption{The half-open parallelepiped $\Pi$}
\label{fig: half-open box}
\end{center}
\end{figure}

Each vertex of $\Pi$ may be written as 
\begin{equation}
\bm v_I 
:=
\displaystyle\sum_{i\in I}\bm w_i,
\end{equation}
for some subset $I \subseteq [d]\coloneq  \{1, 2, 3, \dots, d\}$, as suggested by Figure \ref{fig: half-open box}.
An important and recurring theme throughout the paper is the finite abelian group 
\begin{equation}
    G=W^{-1}\mathbb Z^d/\mathbb Z^d,
\end{equation}
where $W$ is the matrix whose columns are $\bm w_1, \dots, \bm w_d$. 
$G$ has the following useful set of coset representatives: 
\begin{equation}
\label{eq: R as the set of representatives of the quocient group}
\mathcal R:=W^{-1}\mathbb Z^d\cap[0,1)^d.
\end{equation}
These are rational points inside the unit cube $[0,1)^d$. The natural isomorphism
$  W^{-1}\Z^d/\Z^d \cong \Z^d/W\Z^d$ given by $ \bm x+\Z^d \mapsto   W\bm x+W\Z^d$
implies that 
$|\mathcal R|
= [\Z^d:W\Z^d] = |\det W|$.
Since $\Pi$ is the half-open parallelepiped generated by the columns of $W$, we also have
\begin{equation}
\label{eq:order-G-volume-low-moment-proof}
|\mathcal R|
=
|\det W|
=
\vol\Pi.
\end{equation}

At each vertex $\bm v_I$, there exists a simplicial vertex tangent cone $\mathcal K^{\#}_{\bm v_I}$, which is {\bf partially-open} and generated by the $\bm w_i's$, but with appropriate signs, as follows.  

\begin{definition}
To each vertex  
$\bm v_I \in \Pi$, we associate the following data.
\begin{enumerate}[(a)]
\item 
{\bf The vertex tangent cone} at $\bm v_I$, with apex at the origin:
\begin{equation}
\mathcal K_{\bm v_I}^{\#}
=\left\{-\sum_{i\in I}\lambda_i\bm w_i+\sum_{j\in I^{c}}\lambda_j\bm w_j:\lambda_i>0\text{, if }i\in I,\lambda_j\ge 0\text{, if }j\in I^{c}\right\}.
\end{equation}
\item 
The \textbf{half-open parallelepiped} with a vertex at the origin:  
\begin{equation}
\Pi^{\#}_{\bm v_I}
=
\left\{-\sum_{i\in I}\lambda_i\bm w_i+\sum_{j\in I^{c}}\lambda_j\bm w_j:\lambda_i\in(0,1]\text{, if }i\in I,\lambda_j\in[0,1)\text{, if }j\in I^{c}\right\}.
\label{eq:half-open parallelepiped at vI}
\end{equation}

\hfill 
\scalebox{.8}{$\smallpolytope$}
\end{enumerate}
\end{definition}
 
For any $\bm z\in \C^d$, and $\bm x \in \R^d$, we will use the slightly unusual, complexified inner product 
$\langle \bm x, \bm z \rangle:= \sum_{m=1}^d x_m z_m$.
The {\bf Barnes polynomials} are defined algebraically by the generating function
\begin{equation}
\label{Barnes}
\frac{x^d e^{t x}}
{(e^{a_1 x}-1)\cdots(e^{a_d x}-1)}
:=
\sum_{k\geq 0}
B_k(t,\va)\frac{x^k}{k!},
\end{equation}
where
$\va:= (a_1,\dots,a_d)
\in \mathbb C^d$
is fixed,  $a_k\not=0, k=1, \dots, d$, and where
$t\in\mathbb R_{>0}$. The identity \eqref{Barnes} is understood for all
$x\in\mathbb C$ in the domain of convergence of the series. It follows
directly from \eqref{Barnes} that, for each nonnegative integer $k$, the
function $B_k(t,\va)$ is a polynomial in $t$.

%%%%%%%%%%%%%%%%%%%%%%%%%%%%%%%%%%%
%%%%%%%%%%%%%%%%%%%%%%%%%%%%%%%%%%%

\subsection{Statements of the results}

In order to prove the main theorems, we will use the following lemma.
It allows us to express polytope Dedekind sums in terms of an explicit periodic function of $t$ summed over a finite abelian group. 

Throughout, we use the finite abelian group of rational points $\mathcal R:= W^{-1}\mathbb Z^d\cap[0,1)^d$, where $W$ is the integral matrix whose columns are $\bm w_1, \dots, \bm w_d$.

\begin{lemma}[Polytope Dedekind sums in \(W\)-coordinates]
\label{lem: Polytope Dedekind sums in W-coordinates}
For every \(t\in\mathbb R\) and 
$m \in \Z_{ \geq 0}$, the polytope Dedekind sums enjoy the following formulation:
\begin{equation}
\label{eq: polytope Dedekind sums in terms of fractional part and explicit representatives}
\mu_m(\Pi,\bm z,t\bm v_I)
=
\sum_{(\beta_1, \dots, \beta_d) \in\mathcal R}
\left(
\sum_{j\notin I}
\langle \bm w_j,\bm z\rangle \beta_j
+
\sum_{j\in I}
\langle \bm w_j,\bm z\rangle
\{\beta_j-t\}
\right)^m,
\end{equation}
where $I \subseteq [d],
\bm z \in \C^d$.
\hfill\hyperlink{proof of lemma: lem: Polytope Dedekind sums in W-coordinates}{(Proof)} $\square$
\end{lemma}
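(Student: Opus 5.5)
The plan is to unwind the definition of $\mu_m(\Pi,\bm z,t\bm v_I)$ by passing to $W$-coordinates. Every point of $\R^d$ can be written uniquely as $W\bm x$ with $\bm x\in\R^d$. Reducing modulo $\Pi$ means replacing $\bm x$ by its fractional-part vector $\{\bm x\}:=(\{x_1\},\dots,\{x_d\})$, since $\Pi=W[0,1)^d$ and the translates $\Pi+W\Z^d$ tile $\R^d$. Thus the set $\Z^d-t\bm v_I \pmod{\Pi}$ is the image of $\Z^d-t\bm v_I$ under the map $W\bm x\mapsto W\{\bm x\}$. I will interpret the sum in the definition as running over the distinct points of $\Pi$ obtained this way. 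This set is in bijection with $(\Z^d-t\bm v_I)/W\Z^d$, a coset of $\Z^d/W\Z^d$, and so it has $|\det W|$ elements.

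First, I write $\Z^d=W(W^{-1}\Z^d)$. Since $\bm v_I=\sum_{i\in I}\bm w_i=W\bm e_I$, where $\bm e_I$ is the indicator vector of $I$, the translated lattice becomes $\Z^d-t\bm v_I=W(W^{-1}\Z^d-t\bm e_I)$. Reduction mod $\Pi$ then sends $W(\bm\beta-t\bm e_I)$ to $W\{\bm\beta-t\bm e_I\}$. Next, I use the coset representatives $\mathcal R$: the map $\bm\beta\mapsto W\{\bm\beta-t\bm e_I\}$, with $\bm\beta\in\mathcal R$, gives a bijection from $\mathcal R$ onto $\Z^d-t\bm v_I\pmod{\Pi}$.

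To check this bijection, I first check that it is well defined on $G=W^{-1}\Z^d/\Z^d$: changing $\bm\beta$ by an element of $\Z^d$ does not change the fractional part. For injectivity, suppose $\{\bm\beta-t\bm e_I\}=\{\bm\beta'-t\bm e_I\}$. Then $\bm\beta-\bm\beta'\in\Z^d$, so $\bm\beta=\bm\beta'$ because both lie in $[0,1)^d$. Surjectivity follows from the definition of reduction together with the cardinality count $|\mathcal R|=|\det W|$ from \eqref{eq:order-G-volume-low-moment-proof}.

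Finally, I compute the inner product. For $\bm p=W\{\bm\beta-t\bm e_I\}$,
\[
\langle \bm p,\bm z\rangle=\sum_{j=1}^d \{\beta_j-t[j\in I]\}\,\langle \bm w_j,\bm z\rangle .
\]
For $j\notin I$ the coefficient is $\{\beta_j\}=\beta_j$, because $\beta_j\in[0,1)$. For $j\in I$ it is $\{\beta_j-t\}$. Raising to the $m$th power and summing over $\bm\beta\in\mathcal R$ gives \eqref{eq: polytope Dedekind sums in terms of fractional part and explicit representatives}.

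The main obstacle is purely one of interpretation: making precise that ``$\bmod\ \Pi$'' means the unique representative in the fundamental domain $\Pi$ of the lattice $W\Z^d$. Once the sum is taken over the finite set of reduced points, with no multiplicity issues, the rest is bookkeeping. Linearity of the complexified inner product in its first argument handles the complex $\bm z$ without any change.
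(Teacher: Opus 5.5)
Your proposal is correct and follows essentially the same route as the paper. Both arguments write points of $\Pi$ as $W\bm x$ with $\bm x\in[0,1)^d$, match the reduced translated lattice with the coset representatives $\mathcal R$ via $x_j=\beta_j$ for $j\notin I$ and $x_j=\{\beta_j-t\}$ for $j\in I$, and then expand $\langle \bm p,\bm z\rangle$ linearly. The only difference is cosmetic: the paper builds the map $\bm p\mapsto\bm\beta$, while you build its inverse $\bm\beta\mapsto W\{\bm\beta-t\bm e_I\}$ and check bijectivity directly.
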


\noindent

The complete Ehrhart quasipolynomial of $\Pi$, valid for all positive dilations $t$, has a surprisingly simple form.

\begin{thm}[The complete Ehrhart quasi-polynomial]
\label{thm: Ehrhart quasi-polynomial in terms of representatives of the finite abelian group}
  We have the following formula for the Ehrhart quasi-polynomial of a half-open integer parallelepiped $\Pi$, for all $t>0$:
    \begin{equation}
\label{eq: Ehrhart quasi-poly in terms of group representatives}
L_\Pi(t)
=
\sum_{(\beta_1, \dots, \beta_d) \in \mathcal R}
\prod_{j=1}^d
\left\lceil t-\beta_j\right\rceil.
\end{equation}

\hfill\hyperlink{proof of theorem:  thm: Ehrhart quasi-polynomial in terms of representatives of the finite abelian group}{(Proof)} $\square$
\end{thm}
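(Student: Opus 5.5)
Change to $W$-coordinates and count fibers over the group $G$. Write each lattice point as $\bm p = W\bm x$. The map $\bm p\mapsto W^{-1}\bm p$ is a bijection $\Z^d\to W^{-1}\Z^d$, and $\bm p\in t\Pi$ if and only if $\bm x\in[0,t)^d$. Therefore
\[
L_\Pi(t)=\left|W^{-1}\Z^d\cap[0,t)^d\right|.
\]
Since $\mathcal R$ is a complete set of coset representatives of $G=W^{-1}\Z^d/\Z^d$, every $\bm x\in W^{-1}\Z^d$ can be written uniquely as $\bm x=\bm\beta+\bm n$ with $\bm\beta\in\mathcal R$ and $\bm n\in\Z^d$. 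Concretely, $\bm\beta=\{\bm x\}$ componentwise and $\bm n=\lfloor \bm x\rfloor$; this uses that $\Z^d\subseteq W^{-1}\Z^d$, so $\{\bm x\}\in W^{-1}\Z^d\cap[0,1)^d$. Hence
\[
L_\Pi(t)=\sum_{\bm\beta\in\mathcal R}\left|\{\bm n\in\Z^d:\ \bm\beta+\bm n\in[0,t)^d\}\right|.
\]

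The condition on $\bm n$ splits coordinatewise, so the inner count is a product over $j=1,\dots,d$. Fix $j$ and count integers $n_j$ with $0\le \beta_j+n_j<t$, that is, $-\beta_j\le n_j< t-\beta_j$. Because $0\le\beta_j<1$, the lower bound forces $n_j\ge 0$. The number of integers $n$ with $0\le n<s$ is $\lceil s\rceil$ when $s>0$ and $0$ when $s\le 0$. Apply this with $s=t-\beta_j$.

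The main obstacle is the case $t\le\beta_j$, where the factor should be $0$. Here $-1<t-\beta_j\le 0$, since $t>0$ and $\beta_j<1$, so $\lceil t-\beta_j\rceil=0$ anyway. The formula $\lceil t-\beta_j\rceil$ is therefore valid for every $t>0$, and the hypothesis $t>0$ is used exactly at this point. Taking the product over $j$ and summing over $\mathcal R$ gives \eqref{eq: Ehrhart quasi-poly in terms of group representatives}.

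Lemma~\ref{lem: Polytope Dedekind sums in W-coordinates} is not needed. Alternatively one could derive the result from it by writing $\lceil t-\beta_j\rceil = t-\beta_j+\{\beta_j-t\}$ and expanding the product, which expresses $L_\Pi(t)$ through polytope Dedekind sums. The direct count above is the cleanest route, though.
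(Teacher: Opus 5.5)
Your proof is correct and takes essentially the same route as the paper. You pass to $W$-coordinates so that $L_\Pi(t)=|W^{-1}\Z^d\cap[0,t)^d|$, split $W^{-1}\Z^d$ into the cosets $\bm\beta+\Z^d$ with $\bm\beta\in\mathcal R$, and count coordinatewise to obtain $\prod_{j}\lceil t-\beta_j\rceil$. You also handle correctly the case $0<t\le\beta_j$, where $-1<t-\beta_j\le 0$ makes the factor vanish, which the paper leaves implicit.
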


%%%%%%%%%%%

\begin{thm}[Ehrhart coefficients and an alternating difference operator]
\label{thm:partial-alternating-differences}
 For each subset $S\subseteq[d]$, 
we define
\begin{equation}
p_S(\Pi, t)
:=
\sum_{(\beta_1, \dots, \beta_d) \in \mathcal R}
\prod_{j\in S}
\bigl(\{\beta_j-t\}-\beta_j\bigr),
\label{eq:DS-definition}
\end{equation}
where
$p_{\varnothing}(\Pi;t)
:=
|\mathcal R|
=
\operatorname{vol}(\Pi)$.
We fix \(\bm z\in\mathbb C^d\), and 
let
$a_j:=\langle \bm w_j,\bm z\rangle$ and $\bm v_I=\sum_{i\in I}\bm w_i$.
Then the following statements hold.
\begin{enumerate}[(a)]
\item
\label{part a of Ehrhart coefficients}
We fix \(S\subseteq [d]\), and 
\(t\in\mathbb R\).  Then for  
each integer \(k\) with \(0\leq k<|S|\), we have:
\begin{equation}
\sum_{I\subseteq S}
(-1)^{|S|-|I|}
\mu_k(\Pi,\bm z,t\bm v_I)
=
0.
\label{eq:partial-vanishing}
\end{equation}

\item
\label{part b of Ehrhart coefficients}
At the critical degree \(k=|S|\), we have:
\begin{equation}
\sum_{I\subseteq S}
(-1)^{|S|-|I|}
\mu_{|S|}(\Pi,\bm z,t\bm v_I)
=
|S|!
\left(\prod_{j\in S}a_j\right)
p_S(\Pi, t).
\label{eq:critical-partial-difference}
\end{equation}

\item
\label{part c of Ehrhart coefficients}
Let $L_\Pi(t)=\sum_{q=0}^d c_q(t)t^q$
be the period-one real quasi-polynomial of $\Pi$. 
Then, for each \(0\leq r\leq d\), we have:
\begin{equation}
c_{d-r}(t)
=
\sum_{\substack{S\subseteq[d]\\|S|=r}}
p_S(\Pi, t).
\label{eq:ehrhart-coefficient-DS}
\end{equation}
In particular, if \(a_j\neq0\) for all \(j\), then
\begin{equation}
c_{d-r}(t)
=
\frac{1}{r!}
\sum_{\substack{S\subseteq[d]\\|S|=r}}
\frac{1}{\prod_{j\in S}a_j}
\sum_{I\subseteq S}
(-1)^{r-|I|}
\mu_r(\Pi,\bm z,t\bm v_I).
\label{eq:ehrhart-coefficient-generic-z}
\end{equation}
\end{enumerate}
\hfill
\hyperlink{proof of theorem: thm:alternating sums for low-rank polytope Dedekind sums}{(Proof)}
 $\square$
\end{thm}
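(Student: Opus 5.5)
Everything will be reduced to the explicit $W$-coordinate formula of Lemma~\ref{lem: Polytope Dedekind sums in W-coordinates}. For fixed $\beta\in\mathcal R$, set $x_j:=\beta_j$ and $y_j:=\{\beta_j-t\}$, and write $\delta_j:=y_j-x_j$. By the lemma,
\[
\mu_k(\Pi,\bm z,t\bm v_I)=\sum_{\beta\in\mathcal R}\Bigl(A_0(\beta)+\sum_{j\in I}a_j\delta_j\Bigr)^k,\qquad A_0(\beta):=\sum_{j=1}^d a_jx_j .
\]
Thus for each $\beta$ the summand is $F(u_I)$, where $F(u)=(A_0+\langle u,\cdot\rangle)^k$ and $u_I=\sum_{j\in I}a_j\delta_j$. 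The alternating sum $\sum_{I\subseteq S}(-1)^{|S|-|I|}F\bigl(A_0+\sum_{j\in I}h_j\bigr)$ with $h_j=a_j\delta_j$ is exactly the iterated finite difference $\Delta_{h_{j_1}}\cdots\Delta_{h_{j_r}}$ of the one-variable polynomial $X\mapsto X^k$, evaluated at $A_0$, where $S=\{j_1,\dots,j_r\}$.

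For parts (a) and (b) I will use the elementary fact that each operator $\Delta_h f(X)=f(X+h)-f(X)$ lowers the degree of a polynomial by one and multiplies the leading coefficient by $h\cdot\deg$. Hence $r=|S|$ successive differences kill $X^k$ when $k<r$, which gives (a). At $k=r$ they produce the constant $r!\prod_{j\in S}h_j=r!\prod_{j\in S}a_j\delta_j$. Summing over $\beta\in\mathcal R$ then yields $r!\prod_{j\in S}a_j\,p_S(\Pi,t)$, which is (b). One can alternatively expand multinomially and use inclusion–exclusion, noting that a monomial avoiding some $j\in S$ cancels.

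For part (c) I start from Theorem~\ref{thm: Ehrhart quasi-polynomial in terms of representatives of the finite abelian group}. The key identity is $\lceil t-\beta_j\rceil=t-\beta_j+\{\beta_j-t\}=t+\delta_j$, since $\lceil x\rceil=x+\{-x\}$. Therefore
\[
L_\Pi(t)=\sum_{\beta\in\mathcal R}\prod_{j=1}^d(t+\delta_j)=\sum_{S\subseteq[d]}t^{d-|S|}\sum_{\beta\in\mathcal R}\prod_{j\in S}\delta_j=\sum_{r=0}^d t^{d-r}\sum_{|S|=r}p_S(\Pi,t).
\]
The coefficients are periodic of period one in $t$, because each $\delta_j$ depends only on $\{t\}$. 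Reading off the coefficient of $t^{d-r}$ gives \eqref{eq:ehrhart-coefficient-DS}. The generic-$\bm z$ formula \eqref{eq:ehrhart-coefficient-generic-z} then follows by solving (b) for $p_S$ when all $a_j\neq0$.

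The only real subtlety, and so the expected obstacle, is interpretive. The decomposition into "coefficients" $c_q(t)$ is not unique for real quasi-polynomials with periodic coefficients. I will therefore state that the $c_q$ are \emph{defined} by this canonical expansion in which every $c_q$ has period one, and check that this matches the paper's notion of the period-one quasi-polynomial. I also need the ceiling identity to hold for all real $t$, including when $\beta_j-t\in\Z$; there $\{\beta_j-t\}=0$ and $\lceil t-\beta_j\rceil=t-\beta_j$, so it is consistent.
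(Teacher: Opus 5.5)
Your proposal is correct and follows the paper's proof. Lemma~\ref{lem: Polytope Dedekind sums in W-coordinates} turns each alternating sum into an $|S|$-fold iterated difference of $X\mapsto X^k$ at $x_{\bm\beta}=\sum_j a_j\beta_j$, with steps $h_j=a_j(\{\beta_j-t\}-\beta_j)$, which gives (a) and (b); (c) follows by expanding Theorem~\ref{thm: Ehrhart quasi-polynomial in terms of representatives of the finite abelian group} via $\lceil t-\beta_j\rceil=t+\{\beta_j-t\}-\beta_j$. Two small points: the non-uniqueness worry is unnecessary, because coefficients sharing period one are unique (for fixed $s$, the identity at $t=s+n$, $n\in\Z_{>0}$, is a polynomial identity in $n$), and your remark that $\Delta_h$ ``lowers the degree by one'' should say ``to at most one less'' so that it also covers steps $h_j=0$.
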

\noindent
See Example \ref{example: nonvanishing of alternating sum for k=d} for an application of Theorem \ref{thm:partial-alternating-differences} to the half-open unit cube.

For $s<0$, we define $L_{\Pi}(s)$ to be 
the step-polynomial continuation of the real Ehrhart function $L_{\Pi}(t)$ with $t>0$.  Moreover, we define 
\begin{equation}
\label{eq: THE FLIP of the half-open parallelepiped}
    \Pi^* :=\Big\{\sum_{i=1}^d\lambda_i\bm{w}_i:\lambda_i
    \in (0,1]\Big\},
\end{equation}
and note that
$\Pi^*= -\Pi + \bm w_1 + \cdots + \bm w_d$. 

Stanley proved a general reciprocity law for all partially open polytopes \cite{Stanley1974}. Here we merely extend Stanley's reciprocity law to all positive dilates of half-open parallelepipeds.

\begin{thm}[Ehrhart-type reciprocity law for half-open parallelepipeds]
\label{Ehrhart-type reciprocity law}
 
Let $\Pi  \subset \R^d$ be a half-open integer parallelepiped.  Then for all real $t>0$, we have:
\begin{equation}
L_{\Pi}(-t) = (-1)^d L_{\Pi^*}(t).
\end{equation}

\hfill\hyperlink{proof of theorem:  Ehrhart-type reciprocity law}{(Proof)} $\square$
\end{thm}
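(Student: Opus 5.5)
The plan is to work entirely from the explicit formula of Theorem~\ref{thm: Ehrhart quasi-polynomial in terms of representatives of the finite abelian group},
\[
L_\Pi(t)=\sum_{\bm\beta\in\mathcal R}\prod_{j=1}^d\lceil t-\beta_j\rceil ,
\]
and to read off the step-polynomial continuation directly. I first write $\lceil t-\beta_j\rceil = t-\beta_j+\{\beta_j-t\}$, which holds for all real $t$ because $\lceil x\rceil = x+\{-x\}$. Each factor is then a degree-one polynomial in $t$ with a period-one step coefficient. This expression is the step-polynomial continuation of the factor, so the continuation of $L_\Pi$ to $s<0$ is given by the same formula $\sum_{\bm\beta}\prod_j\lceil s-\beta_j\rceil$. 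This is consistent with part (c) of Theorem~\ref{thm:partial-alternating-differences}.

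Next, I substitute $s=-t$ with $t>0$. Then $\lceil -t-\beta_j\rceil=-\lfloor t+\beta_j\rfloor$, which gives
\[
L_\Pi(-t)=(-1)^d\sum_{\bm\beta\in\mathcal R}\prod_{j=1}^d\lfloor t+\beta_j\rfloor .
\]
It then remains to show that the right-hand sum equals $L_{\Pi^*}(t)$.

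To count $t\Pi^*\cap\Z^d$, I write a lattice point as $\bm p=W\bm x$ with $\bm x\in W^{-1}\Z^d$ and $x_j\in(0,t]$. The point $\bm x$ determines a coset of $G$ with a unique representative $\bm\beta\in\mathcal R$, so $x_j=\beta_j+n_j$ with $n_j\in\Z$, and the coordinates decouple. For fixed $\bm\beta$, the number of admissible $n_j$ is $\#\{n\in\Z:0<\beta_j+n\le t\}=\lfloor t-\beta_j\rfloor+\lceil \beta_j\rceil$, which equals $\lfloor t-\beta_j\rfloor+1$ if $\beta_j>0$ and $\lfloor t\rfloor$ if $\beta_j=0$. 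This gives
\[
L_{\Pi^*}(t)=\sum_{\bm\beta\in\mathcal R}\prod_j\bigl(\lfloor t-\beta_j\rfloor+[\beta_j>0]\bigr).
\]
The same argument, with the interval $[0,t)$, presumably underlies Theorem~\ref{thm: Ehrhart quasi-polynomial in terms of representatives of the finite abelian group} itself.

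The main obstacle is matching this with $\sum_{\bm\beta}\prod_j\lfloor t+\beta_j\rfloor$. I will use the involution $\bm\beta\mapsto\bm\beta'$ on $\mathcal R$ given by $\beta'_j=\{-\beta_j\}$, which is well defined because it is the representative of $-\bm\beta+\Z^d$ in the group $G$. If $\beta_j>0$, then $\beta'_j=1-\beta_j$ and
\[
\lfloor t+\beta'_j\rfloor=\lfloor t+1-\beta_j\rfloor=\lfloor t-\beta_j\rfloor+1 .
\]
If $\beta_j=0$, then $\beta'_j=0$ and $\lfloor t+\beta'_j\rfloor=\lfloor t\rfloor$. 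So the factors agree coordinatewise, and reindexing the sum by $\bm\beta'$ gives $\sum_{\bm\beta}\prod_j\lfloor t+\beta_j\rfloor=L_{\Pi^*}(t)$. This proves $L_\Pi(-t)=(-1)^dL_{\Pi^*}(t)$.

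The only delicate point is justifying that "step-polynomial continuation" means exactly substituting $s$ into the expression $\prod_j\bigl(t-\beta_j+\{\beta_j-t\}\bigr)$. I would make this precise by expanding the product into $\sum_q c_q(t)t^q$, where the $c_q$ are period-one step functions, and noting that the expansion is the same identity of functions for all real $t$.
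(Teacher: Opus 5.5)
Your proposal is correct and follows essentially the same route as the paper: evaluate the formula of Theorem~\ref{thm: Ehrhart quasi-polynomial in terms of representatives of the finite abelian group} at $-t$, count $t\Pi^*\cap\Z^d$ coset by coset over $(0,t]^d$, and match the one-dimensional factors using the involution $\bm\beta\mapsto\{-\bm\beta\}$ of $\mathcal R$. The paper packages your two identities, $\lceil -t-\beta_j\rceil=-\lfloor t+\beta_j\rfloor$ and $\lfloor t+\{-\beta_j\}\rfloor=\#\{n\in\Z:0<n+\beta_j\le t\}$, into the single identity $\lceil -t-\alpha\rceil=-G_{\{-\alpha\}}(t)$.
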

%\hfill
%\hyperlink{proof of theorem: thm:alternating sums for low-rank polytope Dedekind sums}{(Proof)}
% $\square$

\begin{remark}\label{remark: nonvanishing of alternating sum for k=d}
Part \eqref{part c of Ehrhart coefficients} of Theorem \ref{thm:partial-alternating-differences} has an equivalent formulation, using the following particular choice for $z$.  For each \(S\subseteq[d]\), let
$
\mathbf 1_S:=\sum_{j\in S}\bm e_j
$, and 
$\bm z_S:=W^{-T}\mathbf 1_S$.
It follows that:
\[
\langle \bm w_j,\bm z_S\rangle
=
\begin{cases}
1,&j\in S,\\
0,&j\notin S.
\end{cases}
\]
Thus \eqref{eq:critical-partial-difference} becomes
\begin{equation}
\label{D_S formula}
p_S(\Pi, t)
=
\frac{1}{|S|!}
\sum_{I\subseteq S}
(-1)^{|S|-|I|}
\mu_{|S|}(\Pi,\bm z_S,t\bm v_I).
\end{equation}
Substituting \eqref{D_S formula} into \eqref{eq:ehrhart-coefficient-DS} gives us:
\begin{equation}
c_{d-r}(t)
=
\frac{1}{r!}
\sum_{\substack{S\subseteq[d]\\|S|=r}}
\sum_{I\subseteq S}
(-1)^{r-|I|}
\mu_r(\Pi,\bm z_S,t\bm v_I).
\label{eq:ehrhart-coefficient-critical-dedekind}
\end{equation}
Finally, Substituting \eqref{D_S formula} into 
\eqref{eq:ehrhart-DS-expansion} gives us:
\begin{equation}
L_\Pi(t)
=
\sum_{S\subseteq[d]}
\frac{t^{d-|S|}}{|S|!}
\sum_{I\subseteq S}
(-1)^{|S|-|I|}
\mu_{|S|}(\Pi,\bm z_S,t\bm v_I).
\label{eq:ehrhart-critical-dedekind-expansion}
\end{equation}
\end{remark}

\begin{remark}[Equivalent forms of the quasi-constant coefficient]
\label{cor:constant-coefficient-forms}
By  \eqref{eq:ehrhart-coefficient-DS}, the quasi-constant coefficient is
\begin{align}
c_0(t)
=p_{[d]}(\Pi,t)=
\sum_{\beta\in \mathcal R}
\prod_{j=1}^{d}
\bigl(\{\beta_j-t\}-\beta_j\bigr).
\label{eq:c0-ceilings}
\end{align}
More generally, fix \(\bm z\in\mathbb C^d\) and put
$a_j:=\langle \bm w_j,\bm z\rangle$.
If \(a_j\neq0\) for every \(j\), then
\begin{align}
c_0(t)
&=
\frac{(-1)^d}{d!\,a_1\cdots a_d}
\sum_{I\subseteq[d]}
(-1)^{|I|}
\mu_d(\Pi,\bm z,t\bm v_I).
\label{eq:c0-polytope-dedekind-theorem-one-sign}
\end{align}
Thus $c_0(t)$ is exactly the first nonvanishing, full $d$-fold alternating difference of the polytope Dedekind sums.
In particular, let
$\mathbf 1:=(1,\ldots,1)^T,
\bm z_{[d]}:=W^{-T}\mathbf 1$.
Since
$\langle \bm w_j,\bm z_{[d]}\rangle=1$ for 
$1\leq j\leq d$, 
we obtain the normalized formula
\begin{align}
c_0(t)
=
\frac{(-1)^d}{d!}
\sum_{I\subseteq[d]}
(-1)^{|I|}
\mu_d(\Pi,\bm z_{[d]},t\bm v_I).
\label{eq:c0-normalized-theorem-one-sign}
\end{align}
\end{remark}

For further background on Ehrhart polynomials and quasi-polynomials, the
reader may consult the books \cite{Barvinok}, \cite{CCD},
\cite{Robinsbook}, \cite{Stanley}, as well as the papers
 \cite{BeckSamWoods}, \cite{Ehrhart},
\cite{Robins2026}.

%%%%%%%%%%%%%%%%%%

%%%%%%%%%%%%%%%%%%

\section{Some examples}
\label{sec: dimension two}
In this section, we compute explicitly the Ehrhart quasi-polynomial of a
specific half-open parallelepiped, using Theorem
\ref{thm: Ehrhart quasi-polynomial in terms of representatives of the finite abelian group}.
We also illustrate Theorem \ref{thm:partial-alternating-differences} by deriving 
a closed form for the alternating sum of the polytope Dedekind sum of the unit cube.

\begin{example}
\label{ex: 2d example}
Let
$\bm w_1=(1,0)$, 
\, $\bm w_2=(1,2)$,
and 
$
W=
\begin{pmatrix}
1&1\\
0&2
\end{pmatrix}$. 
We recall $\Pi :=W\ [0,1)^2$, so that here 
\(
\vol\Pi=|\det W|=2.
\)

The group \(W^{-1}\mathbb Z^2/\mathbb Z^2\) has precisely the two representatives, namely
$(0,0)$ and $\left(\frac12,\frac12\right)$. 
Therefore, by Theorem \ref{thm: Ehrhart quasi-polynomial in terms of representatives of the finite abelian group}, we have
\begin{align*}
    L_{\Pi}(t)&=\lceil t-0\rceil^2+\lceil t-\frac{1}{2}\rceil^2\\
    &=\lceil t\rceil^2+\lceil t-\frac{1}{2}\rceil^2.
\end{align*}
Using the identity $\lceil x\rceil=x+\{-x\}$ we obtain
\begin{equation}
    L_{\Pi}(t)=(t+\{-t\})^2+\left(t-\frac{1}{2}+\left\{\frac{1}{2}-t\right\}\right)^2.
\end{equation}
Expanding, we get
\begin{equation}
    L_{\Pi}(t)=2t^2+2\left(\{-t\}+\left\{\frac{1}{2}-t\right\}-\frac{1}{2}\right)t+\{-t\}^2+\left(\left\{\frac{1}{2}-t\right\}-\frac{1}{2}\right)^2.
\end{equation}
As a sanity check, if \(t=N\) is a positive integer, then  $\{-N\}=0$
and 
$\left\{\frac12-N\right\}=\frac12$.
So we retrieve the known formula 
$L_\Pi(N)=2N^2$.

Clearly $c_1(t)=2\left(\{-t\}+\left\{\frac{1}{2}-t\right\}-\frac{1}{2}\right)$ has fundamental period $\dfrac{1}{2}$. Hence the linear coefficient exhibits period collapse, although the full coefficient system still has common period 1.
\end{example}

%%%%%%%
\begin{example}
\label{example: nonvanishing of alternating sum for k=d}
We consider the half-open unit cube
$\Pi=[0,1)^d$, with 
$\bm w_j:=\bm e_j, 
\bm z:=(1,\ldots,1)$. 
We claim that for every integer $k\geq 0$ and every $t\in\R$:
\begin{equation}
\label{eq:unit-cube-stirling-closed-form}
\sum_{I\subseteq[d]}
(-1)^{|I|}
\mu_k\bigl(\Pi,\bm z,t\bm v_I\bigr)
=
\begin{cases}
(-1)^d d!\,S(k,d)\{-t\}^k,
    & k>d,\\[2mm]
(-1)^d d!\{-t\}^d,
    & k=d,\\[2mm]
0,
    & 0\leq k<d,
\end{cases}
\end{equation}
where $S(k,d)$ denotes the Stirling number of the second kind.
In particular, the alternating sum vanishes below the critical
degree $d$. But we see from 
\eqref{eq:unit-cube-stirling-closed-form} that
at the critical degree, and above, if we suppose that $t\notin\Z$, then:
\begin{equation}
\label{eq:unit-cube-nonzero-for-noninteger-t}
\sum_{I\subseteq[d]}
(-1)^{|I|}
\mu_k\bigl(\Pi,\bm z,t\bm v_I\bigr)
\neq 0,
\qquad k\geq d.
\end{equation}

\noindent
To prove \eqref{eq:unit-cube-stirling-closed-form}, we first note
that the finite group $\mathcal R$ is trivial for the unit cube.
Moreover,
\begin{equation}
\label{eq:unit-cube-translated-lattice-point}
\bigl(\Z^d-t\bm v_I\bigr)\cap[0,1)^d
=
\sum_{i\in I}\{-t\}\bm e_i,
\end{equation}
and since $\bm z=(1,\ldots,1)$, it follows that here the polytope Dedekind sums are:
\begin{equation}
\label{eq:unit-cube-moment}
\mu_k\bigl(\Pi,\bm z,t\bm v_I\bigr)
:=
\sum_{\bm p \in  
   \Z^d - t\bm v \; (  \hspace{-.1cm}   \bmod \Pi) 
   }    
\langle \bm p, \bm z \rangle^k
=
\bigl(|I|\{-t\}\bigr)^k.
\end{equation}
We therefore have:
\begin{align}
\sum_{I\subseteq[d]}
(-1)^{|I|}
\mu_k\bigl(\Pi,\bm z,t\bm v_I\bigr)
&=
\{-t\}^k
\sum_{r=0}^d
(-1)^r
\binom{d}{r}r^k
\label{eq:unit-cube-binomial-sum}\\
&=
(-1)^d d!\,S(k,d)\{-t\}^k,
\label{eq:unit-cube-Stirling-evaluation}
\end{align}
where we used the standard identity
$
\sum_{r=0}^d
(-1)^r
\binom{d}{r}r^k
=
(-1)^d d!\,S(k,d)
$. 
Finally,
\begin{equation}
\label{eq:relevant-Stirling-values}
S(k,d)
=
\begin{cases}
0, & 0\leq k<d,\\
1, & k=d,\\
S(k,d)>0, & k>d.
\end{cases}
\end{equation}
Substituting these three cases into
\eqref{eq:unit-cube-Stirling-evaluation} proves
\eqref{eq:unit-cube-stirling-closed-form}. 
\hfill\scalebox{.8}{$\smallpolytope$}
\end{example}

%%%%%%%%%%%%%%%%%%%%%%%%%%%%%%%%%%
%%%%%%%%%%%%%%%%%%%%%%%%

\section{\texorpdfstring{$L_{\Pi}(t)$}{LPi(t)} via Barnes polynomials and polytope Dedekind sums}
\label{sec: Ehrhart quasipoly via Barnes and pDs}

In this section, we provide an  explicit expression for all the moments of $t\Pi$, with $t>0$, in terms of Barnes polynomials and polytope Dedekind sums, using the ingredients of \cite{Robins2026}. 
While Theorem \ref{thm: Ehrhart quasi-polynomial in terms of representatives of the finite abelian group} gives an elementary formula for the Ehrhart quasi-polynomial of a half-open integer parallelepiped, Theorem \ref{thm: moments of dilated half-open parallelepipeds} below gives an alternative formulation by means of its $0$'th moment. Moreover, we show in this section that 
Theorem \ref{thm: moments of dilated half-open parallelepipeds}, together with an auxiliary argument, furnishes us with another proof of 
Theorem \ref{thm: Ehrhart quasi-polynomial in terms of representatives of the finite abelian group}.

We note that the reason for in this section is not merely the implication of Theorem \ref{thm: Ehrhart quasi-polynomial in terms of representatives of the finite abelian group} from Theorem \ref{thm: moments of dilated half-open parallelepipeds}.  Of more interest are the tools that give us the reduction arguments, used in the proof of Theorem \ref{thm: moments of dilated half-open parallelepipeds}, which we hope might be useful in reductions of Theorem \ref{thm: moments of dilated half-open parallelepipeds} to other families of rational polytopes in the future.  

The starting point for some of our proofs is Brion's fundamental result
\cite{Brion}.  To state it, we first define the  {\bf integer point transform} of any bounded set $A \subset \R^d$ by  
\begin{equation}
\label{def:integer point transform of a bounded set}
\sigma_A(\bm z) := \sum_{\bm q \in  A  \cap \Z^d}  e^{\langle \bm q, \bm z \rangle},
\end{equation}
a finite sum. To state Brion's result, we also need the corresponding integer point transform of a polyhedral cone $\K$, defined by
\begin{equation}
\label{def:integer point transform of a cone}
\sigma_{\K}(\bm z) := \sum_{\bm q \in  \K  \cap \Z^d}  e^{\langle \bm q, \bm z \rangle},
\end{equation}
an infinite series that turns out to be a rational function.  We will continue to use the same notation $\sigma_{\K}(\bm z)$ to denote this rational function, the natural meromorphic continuation of the series \eqref{def:integer point transform of a cone}.

The discrete Brion Theorem
reduces the integer point transform of a rational polytope $\P$ to
the finite sum of the integer point transforms of its vertex tangent cones:
\begin{equation}
\label{discrete Brion theorem}
\sigma_{\P}(\bm z) = \sigma_{\bm v_1+\K_{\bm v_1}}(\bm z)  + \cdots +  \sigma_{\bm v_N+\K_{\bm v_N}}(\bm z),
\end{equation}
where the vertices of $\P$ are $\bm v_1, \dots, \bm v_N$. 
Brion's identity  
\eqref{discrete Brion theorem}
is valid
for all $z$ in the set
\begin{equation} \label{def:almost all z}
     \left\{ \bm z\in \C^d \mid 
    \langle  \bm w(\bm v),\bm z \rangle \notin 2\pi i \Z, \text{ for any edge } \bm w(\bm v) \text{ emanating from the vertex }\bm v \text{ of } \P
    \right\}.
\end{equation}
 In our context, we will apply Brion's theorem for half-open parallelepipeds, and the theorem is valid in the following somewhat simpler set:
 \begin{equation}
    \label{eq: for almost all z half-open}
    \{\bm z\in\C^d \ | \ \langle\bm w_j,\bm z\rangle\neq 2\pi i \Z, \ j=1,\dots,d\}.
 \end{equation}
 Throughout, we will state our theorems with the words ``{\bf for almost all $\bm z\in \C^d$}'' to mean that they hold for all $z$ in the set \eqref{eq: for almost all z half-open} above.

The following lemma shows that the integer point transform for a half-open cone is in fact a rational function, which is very similar to the analogous classical case of a closed cone. 
\begin{lemma}[Integer point transform of a partially-open cone]
\label{lemma: integer point transform of the half-open cone}
We fix $I\subseteq[d]$.  Let
\begin{equation}
    \mathcal{K^{\#}}=\{\lambda_1\bm w_1+\dots+\lambda_d\bm w_d:\lambda_i>0\text{, if }i\in I\text{ and }\lambda_j\ge0\text{, if }j\not\in I\}
\end{equation}
be a partially-open simplicial cone, where the generators $\bm w_1,\dots,\bm w_d\in\Z^d$ are primitive vectors.  Moreover, let
\begin{align*}
\Pi^{\#}:=\{\lambda_1\bm w_1+\dots+\lambda_d\bm w_d:\lambda_i\in(0,1]\text{, if }i\in I\text{ and }\lambda_j\in[0,1)\text{, if }j\not\in I\}
    \end{align*}
be a half-open parallelepiped, with a vertex at the origin. Then, for any $\bm v\in\R^d$, we have:
\begin{equation}
    \label{eq: integer point transform of the half-open cone}
\sigma_{\bm v+\mathcal K^{\#}}(\bm z)=\frac{\sigma_{\bm v+\Pi^{\#}}(\bm z)}{(1-e^{\langle \bm w_1,\bm z\rangle})\dots(1-e^{\langle \bm w_d,\bm z\rangle})},
\end{equation}
for almost all complex vectors $\bm z\in \C^d$, as in equation \eqref{eq: for almost all z half-open}.
\hfill
\hyperlink{proof of lemma: integer point transform of the half-open cone}{(Proof)}
 $\square$
\end{lemma}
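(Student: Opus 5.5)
The plan is the standard tiling argument for simplicial cones, adapted to the partially-open setting. The cone $\mathcal K^{\#}$ is the disjoint union of the translates $\Pi^{\#}+\sum_{i}n_i\bm w_i$ with $(n_1,\dots,n_d)\in\Z_{\ge0}^d$. To see this, write a point of $\mathcal K^{\#}$ as $\sum_i\lambda_i\bm w_i$; the $\lambda_i$ are unique because the $\bm w_i$ are linearly independent.
\begin{itemize}
\item For $j\notin I$ we have $\lambda_j\ge0$, and we set $n_j:=\lfloor\lambda_j\rfloor$, leaving the remainder in $[0,1)$.
\item For $i\in I$ we have $\lambda_i>0$, and we set $n_i:=\lceil\lambda_i\rceil-1\ge0$, leaving the remainder in $(0,1]$.
\end{itemize}
Uniqueness of the $\lambda_i$, together with uniqueness of these floor and ceiling decompositions, gives disjointness. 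Translating by $\bm v$ gives
\[
\bm v+\mathcal K^{\#}=\bigsqcup_{\bm n\in\Z_{\ge0}^d}\Big(\bm v+\Pi^{\#}+\sum_i n_i\bm w_i\Big).
\]

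Next, I would intersect with $\Z^d$. Since each $\bm w_i\in\Z^d$, translation by $\sum_i n_i\bm w_i$ maps $(\bm v+\Pi^{\#})\cap\Z^d$ bijectively onto the lattice points of the corresponding tile. This holds even though $\bm v$ is an arbitrary real vector. Hence, at least formally,
\[
\sigma_{\bm v+\mathcal K^{\#}}(\bm z)=\sigma_{\bm v+\Pi^{\#}}(\bm z)\prod_{j=1}^d\sum_{n_j\ge0}e^{n_j\langle\bm w_j,\bm z\rangle}.
\]
Here $\sigma_{\bm v+\Pi^{\#}}$ is a finite sum because $\Pi^{\#}$ is bounded.

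The remaining step is analytic, and it is the only real subtlety. The factorization is a genuine identity of convergent series on the open region where $\operatorname{Re}\langle\bm w_j,\bm z\rangle<0$ for all $j$. This region is nonempty because the $\bm w_j$ are linearly independent: choose $\bm z$ real with $W^T\bm z=-\mathbf 1$. On this region every geometric series sums to $1/(1-e^{\langle\bm w_j,\bm z\rangle})$, and absolute convergence justifies the rearrangement into tiles. The right-hand side of the lemma is meromorphic and is holomorphic off the set where some $\langle\bm w_j,\bm z\rangle\in2\pi i\Z$. By the paper's convention, $\sigma_{\bm v+\mathcal K^{\#}}$ denotes the meromorphic continuation of the series, so the identity extends to all $\bm z$ in the set \eqref{eq: for almost all z half-open} by uniqueness of analytic continuation.

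I expect no serious obstacle; the only care needed is in the half-open bookkeeping of the decomposition, namely using the ceiling minus one on the open coordinates. Primitivity of the $\bm w_j$ is not actually needed for this argument.
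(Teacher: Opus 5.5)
Your proposal is correct and follows essentially the paper's argument: the paper writes each lattice point of $\bm v+\mathcal K^{\#}$ as a lattice point of $\bm v+\Pi^{\#}$ plus a nonnegative integer combination of the $\bm w_j$, using a twisted floor $\lfloor\lambda_i\rfloor^{\#}$ on the open coordinates (equal to your $\lceil\lambda_i\rceil-1$), and then expands the geometric series. Your explicit convergence region and analytic-continuation step supply a justification the paper leaves implicit.
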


\begin{lemma}[Integer point transform of a partially-open vertex tangent cone]
\label{lemma:integer point transform of a partially-open vertex tangent cone}
Let $\Pi$ be as in \eqref{eq: half-open parallelepiped} and $\bm v_I+\mathcal K^{\#}_{\bm{v_I}}$ be the vertex tangent cone at the vertex $\bm v_I$. Then the integer point transform of $\bm v_I+\mathcal K^{\#}_{\bm{v_I}}$ is given by
\begin{equation}
\label{eq: integer point transform of vertex tangent half-open cone}
\sigma_{\bm v_I+\mathcal K^{\#}_{\bm {v_I}}}(\bm z)
=
\frac{\sigma_{\bm {v_I}+\Pi^{\#}_{\bm {v_I}}}(\bm z)}{\prod_{j\in I^{c}}(1-e^{\langle \bm w_j,\bm z\rangle})\prod_{i\in I}(1-e^{-\langle \bm w_i,\bm z\rangle})},
\end{equation}
for almost all complex vectors $\bm z\in \C^d$, as in equation \eqref{eq: for almost all z half-open}.
\hfill
\hyperlink{proof of lemma: integer point transform of a partially-open vertex tangent cone}{(Proof)}
 $\square$
\end{lemma}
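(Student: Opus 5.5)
The plan is to reduce Lemma~\ref{lemma:integer point transform of a partially-open vertex tangent cone} to Lemma~\ref{lemma: integer point transform of the half-open cone}. The idea is to rewrite the vertex tangent cone $\mathcal K^{\#}_{\bm v_I}$ as a partially-open simplicial cone of exactly the type treated there, with generators $\bm u_1,\dots,\bm u_d$ given by
\[
\bm u_i:=-\bm w_i \ (i\in I), \qquad \bm u_j:=\bm w_j \ (j\in I^{c}).
\]
These are still primitive integer vectors, since the negative of a primitive vector is primitive. They are also linearly independent. By definition,
\[
\mathcal K^{\#}_{\bm v_I}=\Big\{\sum_k\lambda_k\bm u_k:\ \lambda_i>0 \text{ for } i\in I,\ \lambda_j\ge 0 \text{ for } j\in I^{c}\Big\},
\]
which is precisely the cone $\mathcal K^{\#}$ of Lemma~\ref{lemma: integer point transform of the half-open cone} with $\bm w_k$ replaced by $\bm u_k$ and the same index set $I$ of open directions.

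Next I would check that the fundamental parallelepiped matches. The lemma's $\Pi^{\#}$ for the generators $\bm u_k$ is
\[
\Big\{\sum_k\lambda_k\bm u_k:\ \lambda_i\in(0,1] \text{ for } i\in I,\ \lambda_j\in[0,1) \text{ for } j\in I^{c}\Big\}.
\]
Rewriting in terms of the $\bm w$'s, this is exactly $\Pi^{\#}_{\bm v_I}$ as defined in \eqref{eq:half-open parallelepiped at vI}.

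Then I would apply Lemma~\ref{lemma: integer point transform of the half-open cone} with the translation vector $\bm v:=\bm v_I$. This gives
\[
\sigma_{\bm v_I+\mathcal K^{\#}_{\bm v_I}}(\bm z)=\frac{\sigma_{\bm v_I+\Pi^{\#}_{\bm v_I}}(\bm z)}{\prod_{k}(1-e^{\langle \bm u_k,\bm z\rangle})}.
\]
Substituting $\langle\bm u_i,\bm z\rangle=-\langle\bm w_i,\bm z\rangle$ for $i\in I$ and $\langle\bm u_j,\bm z\rangle=\langle\bm w_j,\bm z\rangle$ for $j\in I^{c}$ turns the denominator into
\[
\prod_{j\in I^{c}}\bigl(1-e^{\langle \bm w_j,\bm z\rangle}\bigr)\prod_{i\in I}\bigl(1-e^{-\langle \bm w_i,\bm z\rangle}\bigr),
\]
which yields \eqref{eq: integer point transform of vertex tangent half-open cone}.

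The only point needing care, and I expect it to be the main (mild) obstacle, is the domain of validity. The ``almost all $\bm z$'' condition for the generators $\bm u_k$ reads $\langle \bm u_k,\bm z\rangle\notin 2\pi i\Z$. Since $2\pi i\Z$ is symmetric under negation, this is the same condition as $\langle \bm w_k,\bm z\rangle\notin 2\pi i\Z$, so the set \eqref{eq: for almost all z half-open} is unchanged.

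For this step I would also recall why the earlier lemma holds, in case its proof implicitly used the orientation of the generators. Every lattice point of $\bm v+\mathcal K^{\#}$ decomposes uniquely as a lattice point of $\bm v+\Pi^{\#}$ plus a nonnegative integer combination of the generators. This decomposition uses the floor or ceiling of each coordinate $\lambda_k$ according to whether the $k$th direction is closed or open. Summing the resulting geometric series gives the rational function, and the series converges on an open set where $\mathrm{Re}\langle\bm u_k,\bm z\rangle<0$, which then extends meromorphically. None of this depends on signs, so the reduction is legitimate.
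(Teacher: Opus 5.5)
Your proposal is correct and is essentially the paper's proof: the paper notes that the vertex parallelepiped is generated by the same primitive vectors up to sign, and says the argument of Lemma~\ref{lemma: integer point transform of the half-open cone} then goes through unchanged. You make that reduction explicit via $\bm u_i=-\bm w_i$ for $i\in I$, and you also check that the domain of validity \eqref{eq: for almost all z half-open} is unchanged, which the paper leaves implicit.
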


\begin{observation}
By Lemma \ref{lemma:integer point transform of a partially-open vertex tangent cone}, any 
vertex tangent cone $\K_v$ has a half-open integer parallelepiped $\Pi \subset \R^d$ that is generated by the primitive edge vectors $\epsilon_1 \bm w_1, \dots \epsilon_d \bm w_d$, for some choices of $\epsilon_j \in \{1, -1\}$.  Consequently, all of the vertex parallelepipeds of $\Pi$  have the same volume: 
\begin{equation}
    \vol \Pi^{\#}_{\bm {v_I}} = \vol \Pi,
\end{equation}
for all $I\subseteq[d]$.
\hfill 
\scalebox{.8}{$\smallpolytope$}
\end{observation}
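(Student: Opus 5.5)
The plan is to show directly that each vertex parallelepiped $\Pi^{\#}_{\bm v_I}$ is an integer translate of $\Pi$ itself. This gives the equality of volumes at once, and in fact also the equality of lattice-point counts. No earlier result is needed beyond the definitions \eqref{eq: half-open parallelepiped} and \eqref{eq:half-open parallelepiped at vI}.

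First I would reparametrize the coordinates indexed by $I$. In \eqref{eq:half-open parallelepiped at vI} a point of $\Pi^{\#}_{\bm v_I}$ has the form $-\sum_{i\in I}\lambda_i\bm w_i+\sum_{j\in I^c}\lambda_j\bm w_j$ with $\lambda_i\in(0,1]$ for $i\in I$. Substitute $\mu_i:=1-\lambda_i$ for $i\in I$, which is a bijection $(0,1]\to[0,1)$, and put $\mu_j:=\lambda_j$ for $j\in I^c$. Then
\[
-\sum_{i\in I}\lambda_i\bm w_i+\sum_{j\in I^c}\lambda_j\bm w_j
=-\sum_{i\in I}\bm w_i+\sum_{k=1}^d\mu_k\bm w_k ,
\]
where every $\mu_k$ ranges over $[0,1)$. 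Hence
\[
\Pi^{\#}_{\bm v_I}=-\bm v_I+\Pi ,
\]
an equality of sets including their boundary conventions. Since Lebesgue measure is translation invariant, $\vol\Pi^{\#}_{\bm v_I}=\vol\Pi=|\det W|$.

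As a cross-check, I would also note the linear-algebra version suggested by the statement. $\Pi^{\#}_{\bm v_I}$ is the half-open parallelepiped spanned by $\epsilon_1\bm w_1,\dots,\epsilon_d\bm w_d$, with $\epsilon_i=-1$ for $i\in I$ and $\epsilon_i=+1$ otherwise. Its volume is therefore $|\det(\epsilon_1\bm w_1,\dots,\epsilon_d\bm w_d)|=|\prod\epsilon_i|\,|\det W|=|\det W|$. The half-openness only affects a measure-zero boundary, so it does not change the volume.

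There is essentially no obstacle. The only point needing care is checking that the open/closed endpoints match exactly under the reparametrization, since the conventions are reversed for $i\in I$. Because $\bm v_I\in\Z^d$, the exact equality $\Pi^{\#}_{\bm v_I}=-\bm v_I+\Pi$ also gives $|\Pi^{\#}_{\bm v_I}\cap\Z^d|=|\Pi\cap\Z^d|=|\mathcal R|$. This stronger fact is what is actually used when applying Lemma \ref{lemma:integer point transform of a partially-open vertex tangent cone}.
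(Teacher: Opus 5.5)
Your proof is correct, and it differs only mildly from the paper's route.

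The Observation itself is justified by the sign-flip argument, which you give as a cross-check. The vertex parallelepiped is spanned by $\epsilon_1\bm w_1,\dots,\epsilon_d\bm w_d$, so its volume is $|\prod_j\epsilon_j|\,|\det W|=|\det W|$. Your primary argument is instead the exact set identity $\Pi^{\#}_{\bm v_I}=\Pi-\bm v_I$, obtained from the substitution $\mu_i=1-\lambda_i$.

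The paper does state this translation identity, but only later, in Step 2 of the proof of Theorem \ref{thm: moments of dilated half-open parallelepipeds} (equation \eqref{eq:vertex-parallelepiped-translate}). That is where it is actually needed.

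What each route buys: yours matches the open/closed boundary conventions exactly and gives the equality of lattice-point counts for free. The determinant route gets the volume in one line without tracking which faces are open.
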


To set notation, we let
$\va = (\langle \bm w_1,\bm z\rangle, \dots, 
\langle \bm w_d,\bm z\rangle)$,
where the $\bm w_i$'s are the (primitive) edges of $\Pi$.
 An interesting result is an inclusion-exclusion formula for Barnes polynomials, which leads to powerful simplifications, and may be of independent interest.

\begin{proposition}[Inclusion-exclusion formula for Barnes polynomials]
\label{prop:finite-difference-Bernoulli-Barnes}
Fix $\va=(a_1,\ldots,a_d)\in \C^d$, with $a_i\neq0, \ i=1,\dots,d$ and 
 define
$
a_I:=\sum_{i\in I}a_i.
$
Then, for every $t\in\R_{>0}$, we have
\begin{equation}
\frac{(-1)^d}{d!}
\sum_{I\subseteq[d]}
(-1)^{|I|}
B_k(ta_I,\va)
=
\begin{cases}
t^d, & k=d,\\
0, & k< d, \\
0, & k > d, \ t = 1.
\end{cases}
\end{equation}
Here the summation is extended over all $2^d$ subsets 
$I\subseteq [d]$.
\hfill\hyperlink{proof of proposition: prop:finite-difference-Bernoulli-Barnes}{(Proof)} $\square$
\end{proposition}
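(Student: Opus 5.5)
The plan is to work entirely with generating functions. Multiply the alternating sum by $x^k/k!$ and sum over $k\ge 0$. The definition \eqref{Barnes} then gives
\begin{equation*}
\sum_{k\ge0}\Bigl(\sum_{I\subseteq[d]}(-1)^{|I|}B_k(ta_I,\va)\Bigr)\frac{x^k}{k!}
=\frac{x^d\sum_{I\subseteq[d]}(-1)^{|I|}e^{t a_I x}}{\prod_{j=1}^d(e^{a_jx}-1)}.
\end{equation*}
Since $a_I=\sum_{i\in I}a_i$, the numerator factors as a product over coordinates:
\begin{equation*}
\sum_{I\subseteq[d]}(-1)^{|I|}e^{ta_Ix}=\prod_{j=1}^d\bigl(1-e^{ta_jx}\bigr).
\end{equation*}
The whole generating function therefore equals
\begin{equation*}
F_t(x):=x^d\prod_{j=1}^d\frac{1-e^{ta_jx}}{e^{a_jx}-1}=(-1)^d x^d\prod_{j=1}^d\frac{e^{ta_jx}-1}{e^{a_jx}-1}.
\end{equation*}

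Next, expand each factor near $x=0$. We have
\begin{equation*}
\frac{e^{ta_jx}-1}{e^{a_jx}-1}=\frac{ta_jx\,(1+O(x))}{a_jx\,(1+O(x))}=t+O(x),
\end{equation*}
and this is holomorphic at $0$ because $a_j\neq0$. Hence $F_t(x)=(-1)^d t^d x^d+O(x^{d+1})$. Comparing coefficients of $x^k/k!$:
\begin{itemize}
\item for $k<d$ the alternating sum vanishes;
\item for $k=d$ it equals $(-1)^d d!\,t^d$.
\end{itemize}
Multiplying by $(-1)^d/d!$ gives the first two cases.

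For $t=1$, each quotient $\frac{e^{a_jx}-1}{e^{a_jx}-1}$ is identically $1$. So $F_1(x)=(-1)^dx^d$ exactly, all coefficients with $k>d$ vanish, and the $k=d$ value is $1=t^d$, consistent with the above.

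There is no real obstacle here. The only step that needs care is justifying termwise manipulation of the generating functions. Near $x=0$ all the series converge, since each $(e^{a_jx}-1)/x$ is nonvanishing in a small disc when $a_j\neq0$. So the identities are identities of analytic functions on a neighborhood of $0$, and coefficient comparison is legitimate.

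It is also worth noting in the text why the restriction $t=1$ is needed for $k>d$. For general $t$, the higher coefficients of $\prod_j (e^{ta_jx}-1)/(e^{a_jx}-1)$ do not vanish. For integer $t$ this product is the polynomial-type expression $\prod_j\sum_{m=0}^{t-1}e^{ma_jx}$, whose Taylor coefficients generally do not vanish.
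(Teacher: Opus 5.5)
Your proposal is correct and is essentially the paper's proof. You factor the alternating exponential sum as $\prod_j(1-e^{ta_jx})=(-1)^d\prod_j(e^{ta_jx}-1)$, expand each quotient $(e^{ta_jx}-1)/(e^{a_jx}-1)$ as $t+O(x)$, and use the exact cancellation at $t=1$; the only difference is that you carry the sign $(-1)^{|I|}$ where the paper uses $(-1)^{d-|I|}$, and your explicit remark on analyticity near $x=0$ justifying coefficient comparison is a small piece of rigor the paper leaves implicit.
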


Theorem 
\ref{thm:partial-alternating-differences}
and Proposition \ref{prop:finite-difference-Bernoulli-Barnes} enjoy an interpretation in the language of iterated discrete derivatives operators, which we give in Section \ref{sec: finite difference operators}.
We also mention that
Proposition \ref{prop:finite-difference-Bernoulli-Barnes} is closely related to the
specialization at $x=0$ of the iterated discrete derivative identity for multiple
Bernoulli polynomials appearing in Appendix A of \cite{KidwaiOsuga2025}.

\noindent

The main result in this section is the following formulation for any discrete moment of a half-open integer parallelepiped.

\begin{thm}[Moments of dilated half-open parallelepipeds]
\label{thm: moments of dilated half-open parallelepipeds}
%\label{thm: moments of dilated half-open parallelepipeds}

Let $\Pi \subset \R^d$ be a half-open integer parallelepiped, fix any real $t>0$ and $m\in\Z_{\ge0}$. Then, we have the following:

\begin{enumerate}[(a)]
\item 
\label{thm1:part a}
The moments of $t\Pi$ are given by:
\begin{align}
\hspace{-1em}
\sum_{\bm p \,\in \,
t \, \Pi \,\cap \,\Z^d} 
\langle \bm p, \bm z \rangle^m
 &=
\tfrac{(-1)^d m! }{(d+m)!}
\sum_{I\subseteq[d]}    
\sum_{k=0}^{d+m}
\tbinom{d+m}{k}  
(-1)^{|I|} 
 B_k\big(t  
 \langle \bm v_I, \bm{z} \rangle,  \va
 \big)
\, \mu_{d+m-k}\left(
\Pi, \bm z, (t-1)\bm v_I
\right),
\end{align}
for almost all complex vectors $\bm z\in \C^d$ as in the set \eqref{eq: for almost all z half-open}. The right-hand side is summed over all $2^d$ subsets of $[d]$, each subset representing a vertex of $\Pi$, the $B_k(x,\va)$'s are Barnes polynomials, defined in \eqref{Barnes} and the $\mu_k(\Pi,\bm z,\bm v)$ are the polytope Dedekind sums, defined in \eqref{def:polytope Dedekind sum}.

\item 
\label{cor:Ehrhart quasi-polynomial of a half-open integer parallelepiped}
In particular, for any $t>0$ the Ehrhart quasi-polynomial of $\Pi$ is:
\begin{align}
\label{Ehrhart poly of half-open parallelepiped}
L_\Pi(t) 
 = \frac{(-1)^d}{d!}
 \sum_{I\subseteq[d]}\sum_{k=0}^{d}  
\binom{d}{k} 
 (-1)^{|I|}
 B_k\big(  t\langle \bm v_I, \bm{z} \rangle,   \va
 \big)
\, \mu_{d-k}\big(
\Pi, \bm z, (t-1)\bm v_I
\big).
 \end{align}
\end{enumerate}

\hfill
\hyperlink{proof of theorem: thm: moments of dilated half-open parallelepipeds}{(Proof)}
 $\square$
\end{thm}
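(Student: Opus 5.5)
We plan to follow the Brion route set up in this section. Fix $t>0$ and $\bm z$ in the set \eqref{eq: for almost all z half-open}. Replace $\bm z$ by $x\bm z$ with $x$ a small complex parameter, so that
\[
\sigma_{t\Pi}(x\bm z)=\sum_{\bm p\in t\Pi\cap\Z^d}e^{x\langle \bm p,\bm z\rangle}=\sum_{m\ge0}\frac{x^m}{m!}\sum_{\bm p\in t\Pi\cap\Z^d}\langle \bm p,\bm z\rangle^m .
\]
The left side is an entire function of $x$. The desired moment is therefore $m!$ times the coefficient of $x^m$.

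First we apply the discrete Brion theorem \eqref{discrete Brion theorem} to the half-open polytope $t\Pi$, whose vertices are $t\bm v_I$ and whose tangent cones are the partially-open cones $t\bm v_I+\K^{\#}_{\bm v_I}$. For half-open polytopes this is justified either by the same inclusion–exclusion/valuation argument or by Stanley-type reasoning. Then we apply Lemma \ref{lemma:integer point transform of a partially-open vertex tangent cone}. Multiplying numerator and denominator by $\prod_{i\in I}(-e^{\langle \bm w_i,\bm z\rangle})$ turns the denominator into
\[
\prod_{j=1}^d (1-e^{xa_j}),
\]
which equals $(-1)^d\prod_j(e^{xa_j}-1)$. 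The numerator becomes $(-1)^{|I|}$ times $e^{x\langle \bm v_I,\bm z\rangle}$ times the transform of the parallelepiped $t\bm v_I+\Pi^{\#}_{\bm v_I}$, shifted by $\bm v_I$.

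The key geometric step is to identify the integer points of the translated parallelepiped $t\bm v_I-\bm v_I+\bm v_I+\Pi^{\#}_{\bm v_I}$. After the shift this region equals $(t-1)\bm v_I+\Pi$ up to a fundamental domain of $W\Z^d$. Its lattice points are therefore $(t-1)\bm v_I+\bm p$ with $\bm p$ ranging over $\Z^d-(t-1)\bm v_I \pmod \Pi$. Hence the numerator equals
\[
e^{x\,t\langle \bm v_I,\bm z\rangle}\sum_{\bm p}e^{x\langle \bm p,\bm z\rangle}=e^{x t\langle\bm v_I,\bm z\rangle}\sum_{n\ge0}\frac{x^n}{n!}\,\mu_n(\Pi,\bm z,(t-1)\bm v_I).
\]
We expect the half-open boundary bookkeeping here to be the main obstacle. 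The signs and the $(0,1]$ versus $[0,1)$ conventions must line up exactly so that both the $e^{xt\langle\bm v_I,\bm z\rangle}$ factor and the argument $(t-1)\bm v_I$ appear. We would verify this by writing points in $W$-coordinates, as in Lemma \ref{lem: Polytope Dedekind sums in W-coordinates}.

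Next, combining the pieces,
\[
\sigma_{t\Pi}(x\bm z)=(-1)^d\sum_I(-1)^{|I|}\frac{e^{xt\langle\bm v_I,\bm z\rangle}}{\prod_j(e^{xa_j}-1)}\sum_n\frac{x^n}{n!}\mu_n .
\]
By \eqref{Barnes}, the fraction equals $x^{-d}\sum_k B_k(t\langle\bm v_I,\bm z\rangle,\va)x^k/k!$. Multiplying the two series and extracting the coefficient of $x^m$ requires $k+n=d+m$, which gives
\[
\frac{(-1)^d}{(d+m)!}\sum_I\sum_k\binom{d+m}{k}(-1)^{|I|}B_k\,\mu_{d+m-k}.
\]
Multiplying by $m!$ yields part (a). The negative powers of $x$ must cancel because the left side is entire. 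This cancellation is consistent with the vanishing statements in Theorem \ref{thm:partial-alternating-differences} and Proposition \ref{prop:finite-difference-Bernoulli-Barnes}, but it is not needed for the proof. Part (b) is the case $m=0$, since the $0$th moment is $L_\Pi(t)$.
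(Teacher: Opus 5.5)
Your proposal is correct and is essentially the paper's proof. Both use Brion's theorem for $t\Pi$, the vertex-cone lemma, the identification $t\bm v_I+\Pi^{\#}_{\bm v_I}=\Pi+(t-1)\bm v_I$, the Barnes expansion of $e^{xt\langle \bm v_I,\bm z\rangle}/\prod_j(e^{xa_j}-1)$, and comparison of the coefficients of $x^m$.

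The step you flag as the main obstacle is an exact identity, $\Pi^{\#}_{\bm v_I}=\Pi-\bm v_I$: for $i\in I$ one has $\lambda_i-1=-(1-\lambda_i)$ with $1-\lambda_i\in(0,1]$. It is not merely an equality up to a choice of fundamental domain, and exactness is what the exponential weights require.

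The only other difference is that the paper applies Brion for rational $t$ and extends to real $t$ afterwards, while you apply it directly for real $t$. This is harmless, since the lattice-point sets of $t\Pi$ and of each cone $t\bm v_I+\K^{\#}_{\bm v_I}$ are unchanged if $t$ is replaced by a suitable nearby rational number.
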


\subsection{Derivation of Theorem \ref{thm: Ehrhart quasi-polynomial in terms of representatives of the finite abelian group} from  Theorem \ref{thm: moments of dilated half-open parallelepipeds}}

We now show that the Barnes polynomial formula in Theorem \ref{thm: moments of dilated half-open parallelepipeds} implies
the elementary group representative formula in Theorem \ref{thm: Ehrhart quasi-polynomial in terms of representatives of the finite abelian group}.
%%%%%%%

\begin{proof}[Proof that Theorem \ref{thm: moments of dilated half-open parallelepipeds} implies Theorem \ref{thm: Ehrhart quasi-polynomial in terms of representatives of the finite abelian group}]
For convenience, put
\begin{equation}
a_j:=\langle \bm w_j,\bm z\rangle,
\qquad
\bm a:=(a_1,\ldots,a_d),
\qquad
a_I:=\sum_{j\in I}a_j
      =\langle \bm v_I,\bm z\rangle
\end{equation}

\noindent
By Theorem \ref{thm: moments of dilated half-open parallelepipeds} \eqref{cor:Ehrhart quasi-polynomial of a half-open integer parallelepiped}, and Lemma \ref{lem: Polytope Dedekind sums in W-coordinates}, we have:
\begin{align}
L_{\Pi}(t)
&=
\frac{(-1)^d}{d!}
\sum_{\bm\beta\in R}
\sum_{I\subseteq[d]}
(-1)^{|I|}
\sum_{k=0}^{d}
\binom{d}{k}
B_k(ta_I,\bm a)\,
X_{I,\bm\beta}^{\,d-k},
\end{align}
where
\begin{equation}
X_{I,\bm\beta}
:=
\sum_{j\notin I}a_j\beta_j
+
\sum_{j\in I}a_j\{\beta_j-t\}.
\end{equation}
The Barnes addition formula \cite[Appendix A]{Robins2026} gives
\begin{equation}
\sum_{k=0}^{d}
\binom{d}{k}
B_k(ta_I,\bm a)\,
X_{I,\bm\beta}^{\,d-k}
=
B_d(ta_I+X_{I,\bm\beta},\bm a).
\end{equation}
We define
$N_j:=\lceil t-\beta_j\rceil$ and 
$x_{\bm\beta}:=\sum_{j=1}^{d}a_j\beta_j$.
Since
\begin{equation}
t+\{\beta_j-t\}
=
\beta_j+\lceil t-\beta_j\rceil
=
\beta_j+N_j,
\end{equation}
we have
\begin{equation}
ta_I+X_{I,\bm\beta}
=
x_{\bm\beta}+\sum_{j\in I}N_ja_j.
\end{equation}
Consequently,
\begin{equation}
\label{Penultimate quasipoly}
L_{\Pi}(t)
=
\frac{(-1)^d}{d!}
\sum_{\bm\beta\in \mathcal R}
\sum_{I\subseteq[d]}
(-1)^{|I|}
B_d\left(
x_{\bm\beta}+\sum_{j\in I}N_ja_j,\bm a
\right).
\end{equation}
\noindent
Now
\begin{equation}
\label{Leading term in Barnes}
B_d(x,\bm a)
=
\frac{x^d}{a_1\cdots a_d}+Q(x),
\qquad
\deg Q<d.
\end{equation}
By precisely the vanishing criterion used in the proof of
Theorem \ref{thm:partial-alternating-differences}, the \(d\)-fold alternating iterated discrete derivative of \(Q\)
vanishes. Hence only the leading term in \eqref{Leading term in Barnes} contributes:
\begin{align}
&\sum_{I\subseteq[d]}
(-1)^{|I|}
B_d\left(
x_{\bm\beta}+\sum_{j\in I}N_ja_j,\bm a
\right)
\notag\\
&\qquad
=
\frac{1}{a_1\cdots a_d}
\sum_{I\subseteq[d]}
(-1)^{|I|}
\left(
x_{\bm\beta}+\sum_{j\in I}N_ja_j
\right)^d
\notag\\
&\qquad
=
(-1)^d d!\prod_{j=1}^{d}N_j.
\end{align}
Substitution into 
\eqref{Penultimate quasipoly} therefore yields
\begin{equation}
L_{\Pi}(t)
=
\sum_{\bm\beta\in \mathcal R}
\prod_{j=1}^{d}N_j
=
\sum_{\bm\beta\in \mathcal R}
\prod_{j=1}^{d}\lceil t-\beta_j\rceil.
\end{equation}
\end{proof}
%%%%%%%%%%%%

%%%%%%%%%%%%%%%%%%%%%%%%%%%
%%%%%%%%%%%%%%%%%%%%%%%%%%%

\section{Interpretation of Theorem 
\ref{thm:partial-alternating-differences} 
as the vanishing of an iterated discrete derivative}
\label{sec: finite difference operators}

In the spirit of Gunnels and Sczech's work \cite{GunnelsSczech} on Dedekind sums as Eisenstein cocycles, 
Theorem \ref{thm:partial-alternating-differences}
may be interpreted 
as a vanishing of an iterated discrete derivative operator.  Additionally, Proposition \ref{prop:finite-difference-Bernoulli-Barnes} may also be interpreted in a similar manner.  Here we detail both interpretations, noting that these interpretations suggest a future relation to coboundary operators.

Difference operators are ubiquitous in discrete geometry and combinatorics.  To link some of our results to these operators, we define
the {\bf discrete derivative in the direction $\bm w \in \R^d$} by:
\begin{equation}
    \Delta_{\bm w} F(\bm x) := 
F(\bm x+ \bm w) -  F(\bm x).
\end{equation}

\noindent
This operator is also known as a forward difference operator. 
Equivalently, if $T_w$ denotes the translation operator
$ (T_{\bm w}F)(\bm x):=F(\bm x+ \bm w)$, then 
$  \Delta_{\bm w} F(\bm x):= (T_{\bm w} -1)F(\bm x)$.

\begin{definition}[Iterated discrete derivative]
\label{def:Iterated discrete derivative}
Let $\Lambda \subset \mathbb{R}^n$ be a full-rank lattice,  and fix
$\bm w_1,\ldots, \bm w_d \in \Lambda$.  Suppose we have a function defined on any translate of $\Lambda$:
\begin{equation}
\label{function on a lattice translate}
        F:\Lambda + \bm x \longrightarrow \C,
\end{equation}
where $\bm x \in \R^d$ is any fixed vector. 
  The {\bf iterated discrete derivative}
 is  defined by
\begin{equation}
\label{iterated discrete derivative}
\left(\Delta_{\bm w_1,\ldots, \bm w_d}F\right)(\bm x)
:=
\sum_{I\subseteq [d]}
(-1)^{d-|I|}
F\!\left(\bm x+\sum_{i\in I}
\bm w_i\right).
\end{equation}
\hfill 
\scalebox{.8}{$\smallpolytope$}
\end{definition}
By inclusion-exclusion, it follows easily that \eqref{iterated discrete derivative} is equivalent to:
\[
\Delta_{\bm w_1,\ldots, \bm w_d} 
=
(T_{\bm w_1}-1)(T_{ \bm w_2}-1)\cdots (T_{\bm w_d}-1).
\]
Thus $\Delta_{\bm w_1,\ldots, \bm w_d}$ is the operator obtained by
taking $d$ successive discrete derivatives, in the directions
$\bm w_1,\ldots, \bm w_d$.
We now apply this language to polytope Dedekind sums.  For fixed
$\Pi$ and $\bm z$, define the function
\begin{equation}
    \Phi_k(\bm x)
:=
\mu_k(\Pi,\bm z,\bm x).
\end{equation}

Hence $\Phi_k$ may be thought of as a function of the translated lattice:
$\Phi_k: 
\Lambda + \bm x \longrightarrow \C$, 
as in \eqref{function on a lattice translate}.
Recalling the definition
$\bm v_I:=\sum_{i\in I}\bm w_i$,
the alternating sum appearing in
Theorem
\ref{thm:partial-alternating-differences}
has the form
\begin{equation}
    \sum_{I\subseteq [d]}
        (-1)^{d-|I|}
        \Phi_k(t\bm v_I).
\end{equation}
        
Therefore the case $S=[d]$ in Theorem
\ref{thm:partial-alternating-differences} part \eqref{part a of Ehrhart coefficients}
may be equivalently interpreted as the following vanishing statement:
\begin{align}
\left(
\Delta_{\bm w_1,\ldots,\bm w_d}\Phi_k
\right)
(t\bm v_I)&:=
\sum_{I\subseteq [d]}
(-1)^{d-|I|}
\mu_k\!\left(\Pi,\bm z,t\bm v_I+\sum_{i\in I}
\bm w_i\right)\\
&=
\sum_{I\subseteq [d]}
(-1)^{d-|I|}
\mu_k\!\left(\Pi,\bm z,t\bm v_I+\bm v_I\right)\\
&=
0,
\end{align}
for $0\le k<d$.
In other words, the rank-$k$ polytope Dedekind sum enjoys a vanishing
$d$-fold iterated discrete derivative in the directions
$\bm w_1,\ldots,\bm w_d$.

%%%%%%%%%%%%%

%%%%%%%%%%%%%%%%%%%%%%%

\section{Proof of Lemma \ref{lem: Polytope Dedekind sums in W-coordinates}}
\label{sec: proof of lemmas}

%%%%%%%%%%%%%%%%%%%%%%%%%%%%%%
\bigskip

\begin{proof}
\hypertarget{proof of lemma: lem: Polytope Dedekind sums in W-coordinates} (of Lemma \ref{lem: Polytope Dedekind sums in W-coordinates})
We first construct a one-to-one correspondence between the points in the half-open parallelepiped $\Pi$ and certain rational points in the half-open unit cube. 
By definition, every point \(\bm p\in\Pi\) has a unique representation
\begin{equation}
\label{p is in Pi}
\bm p=W\bm x=\sum_{j=1}^d x_j\bm w_j,
\qquad
\bm x\in[0,1)^d.
\end{equation}
The condition
\(
\bm p\in \mathbb Z^d-t\bm v_I\pmod{\Pi}
\)
means
\(
\bm p+t\bm v_I\in\mathbb Z^d.
\)
We already know from \eqref{p is in Pi}
that $ p \in \Pi$. 
Since
\(
\bm v_I=W\bm 1_I,
\)
where $\bm 1_I=\sum_{i\in I}\bm e_i$, this is equivalent to
\(
W(\bm x+t\bm 1_I)\in\mathbb Z^d,
\)
or
\(
\bm x+t\bm 1_I\in W^{-1}\mathbb Z^d.
\)
We define 
\[
\mathcal R := W^{-1}\Z^d\cap [0,1)^d.
\]
Thus there is a unique representative
\(
\bm\beta=(\beta_1,\ldots,\beta_d)\in\mathcal R
\)
such that
\[
\bm x+t\bm 1_I\equiv \bm\beta \pmod{\mathbb Z^d}.
\]
Since \(\bm x\in[0,1)^d\), we get
\[
x_j=
\begin{cases}
\beta_j, & j\notin I,\\
\{\beta_j-t\}, & j\in I.
\end{cases}
\]

\noindent
To summarize, we now have a bijection 
$\bm p \rightarrow \bm \beta$.
Therefore
\[
\langle \bm p,\bm z\rangle
=
\sum_{j\notin I}a_j\beta_j
+
\sum_{j\in I}a_j\{\beta_j-t\}.
\]
Raising to the $m$-th power and summing over all representatives
$\bm\beta\in\mathcal R$, we obtain
\begin{equation}
\label{eq:muk-expanded-beta-sum-low-moment-proof}
\mu_m(\Pi,\bm z,t\bm v_I)
=\sum_{\bm p \in  
   \Z^d - t\bm v_I \; (  \hspace{-.1cm}   \bmod \Pi) 
   }    
\langle \bm p,\bm z\rangle^m=
\sum_{\bm\beta\in\mathcal R}
\left(
\sum_{j\notin I}a_j\beta_j
+
\sum_{j\in I}a_j\{\beta_j-t\}
\right)^m.
\end{equation}

\end{proof}

%%%%%%%%%%%%%%

\section{Proof of Theorem \ref{thm: Ehrhart quasi-polynomial in terms of representatives of the finite abelian group}}
\label{sec: proof of theorem explicit ehrhart quasi-poly in terms of representatives}
\begin{proof}
\hypertarget{proof of theorem: thm: Ehrhart quasi-polynomial in terms of representatives of the finite abelian group} (of Theorem \ref{thm: Ehrhart quasi-polynomial in terms of representatives of the finite abelian group})
As before, let
$\Pi:=W[0,1)^d$, $G=
W^{-1}\Z^d/\Z^d.$  
where 
$W$ is the matrix whose columns are 
$\bm w_1,   \cdots  \bm w_d$.
For \(t>0\),
\begin{equation}
\label{eq:tPi-and-tPi-star-real}
t\Pi
=
W[0,t)^d.\notag
\end{equation}

We identify each class in $G$ with its unique representative
$\bm\beta
=
(\beta_1,\ldots,\beta_d)
\in \mathcal R=W^{-1}\Z^d\cap[0,1)^d$.
Then
\begin{equation}
\label{eq:lattice-points-in-W-coordinates}
\bm p\in t\Pi\cap\Z^d
\quad\Longleftrightarrow\quad
W^{-1}\bm p\in W^{-1}\Z^d\cap[0,t)^d.\notag
\end{equation}
Hence, we have the decomposition
\begin{equation}
    W^{-1}\Z^d
        =
        \bigsqcup_{\bm \beta\in\mathcal R}
        (\bm \beta+\mathbb{Z}^d).\notag
\end{equation}
Thus
\[
        L_{\Pi}(t)
        =
        \sum_{\bm \beta\in \mathcal R}
        \#
        \bigl((\bm \beta+\mathbb{Z}^d)\cap [0,t)^d\bigr).
\]
A point of \(\bm \beta+\mathbb{Z}^d\) has the form $\bm\beta+m=(\beta_1+m_1,\ldots,\beta_d+m_d),\ m\in\mathbb{Z}^d.$
It lies in \([0,t)^d\) precisely when $0\leq \beta_j+m_j<t$, for every $j=1,\ldots,d$ .
The coordinates separate, so
\[
        \#
        \bigl((\bm \beta+\mathbb{Z}^d)\cap [0,t)^d\bigr)
        =
        \prod_{j=1}^d
        \#\{m_j\in \mathbb{Z}:0\leq \beta_j+m_j<t\}.
\]
Since \(0\leq \beta_j<1\), the one-dimensional count is
\[
        \#\{m_j\in \mathbb{Z}:0\leq \beta_j+m_j<t\}
        =
        \lceil t-\beta_j\rceil .
\]
Therefore
\[
        L_{\Pi}(t)
        =
        \sum_{\bm \beta\in \mathcal R}
        \prod_{j=1}^d
        \lceil t-\beta_j\rceil ,
\]
as claimed.

\end{proof}

%%%%%%%%%%%%%%%%%%%%%%
%%%%%%%%%%%%%%%%%%%%%%

\section{Proof of Theorem \ref{thm:partial-alternating-differences}}
\label{sec: proof of theorem alternatinf sums}

\begin{proof}
\hypertarget{proof of theorem: thm:alternating sums for low-rank polytope Dedekind sums}
(of Theorem \ref{thm:partial-alternating-differences})
We fix \(S\subseteq[d]\), and 
put \(r:=|S|\), and
\(\bm \beta=(\beta_1,\ldots,\beta_d)\in \mathcal R\). We define
\[
x_{\bm \beta}:=\sum_{j=1}^d a_j\beta_j
\]
and for each \(j\in S\), we define
\[
h_j
:=
a_j\bigl(\{\beta_j-t\}-\beta_j\bigr).
\]
For \(I\subseteq S\), Lemma~\ref{lem: Polytope Dedekind sums in W-coordinates} gives
\begin{align}
\mu_k(\Pi,\bm z,t\bm v_I)
&=
\sum_{\beta\in \mathcal R}
\left(
\sum_{j\notin I}a_j\beta_j
+
\sum_{j\in I}a_j\{\beta_j-t\}
\right)^k
\notag\\
&=
\sum_{\beta\in \mathcal R}
\left(
x_\beta+\sum_{j\in I}h_j
\right)^k.
\label{eq:mu-partial-difference-form}
\end{align}
Therefore
\begin{align}
&\sum_{I\subseteq S}
(-1)^{r-|I|}
\mu_k(\Pi,\bm z,t\bm v_I)
\notag\\
&\qquad=
\sum_{\beta\in \mathcal R}
\sum_{I\subseteq S}
(-1)^{r-|I|}
\left(
x_\beta+\sum_{j\in I}h_j
\right)^k.
\label{eq:partial-difference-sum}
\end{align}
For any polynomial 
\(f\), we will recall the definition of the iterated discrete derivative
$(\Delta_hf)(x):=f(x+h)-f(x)$ given in Section \ref{sec: finite difference operators}.
The inner sum in \eqref{eq:partial-difference-sum} is the
\(r\)-fold iterated discrete derivative
\[
\left(
\prod_{j\in S}\Delta_{h_j}
\right)x^k
\bigg|_{x=x_\beta}.
\]
An \(r\)-fold iterated discrete derivative annihilates every polynomial of
degree less than \(r\). Thus, if \(k<r\), the inner sum vanishes,
proving part \eqref{part a of Ehrhart coefficients}. 

\noindent
When \(k=r\), one has
\[
\left(
\prod_{j\in S}\Delta_{h_j}
\right)x^r
=
r!\prod_{j\in S}h_j.
\]
It follows that
\begin{align}
\sum_{I\subseteq S}
(-1)^{r-|I|}
\mu_r(\Pi,\bm z,t\bm v_I)
&=
r!\sum_{\beta\in \mathcal R}\prod_{j\in S}h_j
\notag\\
&=
r!
\left(\prod_{j\in S}a_j\right)
\sum_{\beta\in \mathcal R}
\prod_{j\in S}
\bigl(\{\beta_j-t\}-\beta_j\bigr),
\end{align}
proving part \eqref{part b of Ehrhart coefficients}.
We now derive the Ehrhart coefficient formula. Since
\(\beta_j\in[0,1)\), the identity
$x+\{-x\}=\lceil x\rceil$
with \(x=t-\beta_j\) gives
$\lceil t-\beta_j\rceil
=
t-\beta_j+\{\beta_j-t\}
=
t+\{\beta_j-t\}-\beta_j$.
Applying Theorem \ref{thm: Ehrhart quasi-polynomial in terms of representatives of the finite abelian group} and expanding the product, we obtain
\begin{align}
L_\Pi(t)
&=
\sum_{\beta\in \mathcal R}
\prod_{j=1}^d
\left(
t+\{\beta_j-t\}-\beta_j
\right)
\notag\\
&=
\sum_{\beta\in \mathcal R}
\sum_{S\subseteq[d]}
t^{d-|S|}
\prod_{j\in S}
\bigl(\{\beta_j-t\}-\beta_j\bigr)
\notag\\
&=
\sum_{S\subseteq[d]}
t^{d-|S|}
p_S(\Pi, t).
\label{eq:ehrhart-DS-expansion}
\end{align}
Since every \(p_S(\Pi, t)\) is periodic of period \(1\),
grouping the terms in \eqref{eq:ehrhart-DS-expansion} according to
\(|S|=r\) gives
\[
c_{d-r}(t)
=
\sum_{\substack{S\subseteq[d]\\|S|=r}}
p_S(\Pi, t),
\]
which proves  part \eqref{part c of Ehrhart coefficients}. Finally, assuming
\(a_j\neq0\) for all \(j\), we solve 
\eqref{eq:critical-partial-difference} for
\(p_S(\Pi, t)\) and substitute into
\eqref{eq:ehrhart-coefficient-DS},
yielding
\eqref{eq:ehrhart-coefficient-generic-z}.
\end{proof}

%%%%%%%%%%%%%%%%%%%%%%%%%%%%%%%%
%%%%%%%%%%%%%%%%%%%%%%%%%%%%%%%%

%%%%%%%%%%%%%%%%

\section{Proof of Theorem \ref{Ehrhart-type reciprocity law}, an Ehrhart-type reciprocity law}
\label{sec: proof of ehrhart reciprocity}
\begin{proof}
\hypertarget{proof of theorem:  Ehrhart-type reciprocity law} (of Theorem \ref{Ehrhart-type reciprocity law})
We prove this reciprocity law using Theorem \ref{thm: Ehrhart quasi-polynomial in terms of representatives of the finite abelian group}.
We define \(L_\Pi(-t)\) by evaluating this same step-polynomial at \(-t\):
\begin{equation}
\label{eq:L-Pi-negative-step-evaluation}
L_\Pi(-t)
=
\sum_{\bm\beta\in \mathcal R}
\prod_{j=1}^d
\left\lceil -t-\beta_j\right\rceil.
\end{equation}

\noindent
Similarly, we have $t\Pi^*=W(0,1]^d$ and
\begin{equation}
\label{eq:L-Pi-star-coset-sum}
L_{\Pi^*}(t)
=
\sum_{\delta\in \mathcal R}
\prod_{j=1}^d
G_{\delta_j}(t),
\end{equation}
where $G_\alpha(t):= 
\#\{n\in\Z:0<n+\alpha\leq t\}$.  We have 
$G_0(t) = \lfloor t\rfloor$, and 
for  \(0<\alpha<1\) we have
$G_\alpha(t)
=
\lfloor t-\alpha\rfloor+1$.
We now compare the one-dimensional factors. For \(\alpha\in[0,1)\), let
\begin{equation}
\label{eq:alpha-star-definition}
\alpha^*
=
\{-\alpha\}
=
\begin{cases}
0, & \alpha=0,\\
1-\alpha, & 0<\alpha<1.
\end{cases}
\end{equation}
Then, for every \(t>0\),
\begin{equation}
\label{eq:one-dimensional-reciprocity-factor}
\left\lceil -t-\alpha\right\rceil
=
-
G_{\alpha^*}(t).
\end{equation}
Indeed, if \(\alpha=0\), this says
$\lceil -t\rceil = -\lfloor t\rfloor$,
while if \(0<\alpha<1\), it says
\begin{align}
\left\lceil -t-\alpha\right\rceil
=
-\left\lfloor t+\alpha\right\rfloor 
=
-\left(\left\lfloor t-(1-\alpha)\right\rfloor+1\right) 
=
-G_{1-\alpha}(t).
\label{eq:one-dimensional-alpha-positive}
\end{align}

Applying \eqref{eq:one-dimensional-reciprocity-factor} coordinatewise and using the fact that the map
$\gamma\longmapsto -\gamma$
is a bijection of the finite group \(G\), we get:

\begin{align}
L_\Pi(-t)
&=
\sum_{\gamma\in G}
\prod_{j=1}^d
\left\lceil -t-\gamma_j\right\rceil \nonumber
=
(-1)^d
\sum_{\gamma\in G}
\prod_{j=1}^d
G_{\{-\gamma_j\}}(t)  \\
&=
(-1)^d\sum_{\delta\in G}
\prod_{j=1}^d
G_{\delta_j}(t)
=
(-1)^dL_{\Pi^*}(t).
\end{align}
for every real \(t>0\).
\end{proof}

%%%%%%%%%%%%%%%%%%%%%%%%%%%%%%%%%%%%%%%%%%%%%%%%%%%%%%%%%%%%%%%%%%%%%%%%%%%%%%%%%%%%%%

\section{Proofs of Lemmas  \ref{lemma: integer point transform of the half-open cone} and \ref{lemma:integer point transform of a partially-open vertex tangent cone}, Proposition \ref{prop:finite-difference-Bernoulli-Barnes} and Theorem \ref{thm: moments of dilated half-open parallelepipeds}}
\label{sec: proof of theorem moments}

\begin{proof} 
\hypertarget{proof of lemma: integer point transform of the half-open cone} (of Lemma \ref{lemma: integer point transform of the half-open cone})
Let $\bm m\in(\bm v+\mathcal K^{\#})\cap\Z^d$. Because the cone is simplicial, we have: 
\begin{align*}
\bm m=\bm v+\lambda_1\bm w_1+\dots+\lambda_d\bm w_d,
\end{align*}
for some $\lambda_1,\dots,\lambda_d\in\R$. 
Here $\lambda_i>0$ if $i\in I$, and  $\lambda_j\ge0,$ if $j\not\in I$.

The proof essentially follows Theorem 3.5 of \cite{CCD} verbatim, but with a slight difference.
Here we define the following ``twisted" fractional and integer parts, for clarity of 
exposition:
    \begin{align*}
        \{x\}^{\#}&\coloneq\begin{cases}
            \{x\}\text{, if }x\not\in\Z\\
            1\text{, if }x\in \Z
        \end{cases}\\
        \lfloor x\rfloor^{\#}\coloneq x-\{x\}^{\#}&=\begin{cases}
            \lfloor x\rfloor\text{, if }x\not\in\Z\\
            x-1\text{, if }x\in \Z.
        \end{cases}
    \end{align*}
    Thus, we can write
    \begin{align*}
        \bm m&=\bm v+\sum_{j\in I^{c}}(\{\lambda_j\}+\lfloor\lambda_j\rfloor)\bm w_j+\sum_{i\in I}(\{\lambda_i\}^{\#}+\lfloor\lambda_i\rfloor^{\#})\bm w_i\\
        &=\bm v+\sum_{j\in I^{c}}\{\lambda_j\}\bm w_j
        +\sum_{i\in I}\{\lambda_i\}^{\#}\bm w_i
        +\sum_{j\in I^{c}}\lfloor\lambda_j\rfloor \bm w_j
        +\sum_{i\in I}\lfloor\lambda_i\rfloor^{\#} \bm w_i.
    \end{align*}
    Since $\{\lambda_i\}^{\#}\in(0,1],\ i\in I$, the vector 
    \begin{align*}
        \bm p=\bm v+\sum_{j\in I^{c}}\{\lambda_j\}\bm w_j+\sum_{i\in I}\{\lambda_i\}^{\#}\bm w_i
    \end{align*}
    lies in $\bm v+\Pi^{\#}$. Moreover, $\bm p$ is also in $\Z^d$, because it is a difference of the integer vectors $\bm m$ and $\sum_{j\in I^{c}}\lfloor\lambda_j\rfloor \bm w_j
        +\sum_{i\in I}\lfloor\lambda_i\rfloor^{\#} \bm w_i$.  
        Every vector $\bm m\in (\bm v+\mathcal K^{\#})\cap\Z^d$ can be expressed in the form
        \begin{equation}
        \label{eq: generic lattice vector in a half-open cone}
            \bm m=\bm p+k_1\bm w_1+\dots+k_d\bm w_d,
        \end{equation}
        with $\bm p\in(\bm v+\Pi^{\#})\cap\Z^d$ and integers $k_1,\dots,k_d\ge0$. Uniqueness follows from linear independence of the generators and the half-open definition of the  parallelepiped.  
        Consequently, the rational function on the right-hand side of \eqref{eq: integer point transform of the half-open cone} expands as
        \begin{align*}
            \frac{\sigma_{\bm v+\Pi^{\#}}(\bm z)}{(1-e^{\langle \bm w_1,\bm z\rangle})\dots(1-e^{\langle \bm w_d,\bm z\rangle})}
            &=\sum_{\bm p\in(\bm v+\Pi^{\#})\cap\Z^d}e^{\langle\bm p,\bm z\rangle}\sum_{k_1\geq0}e^{k_1\langle \bm w_1,\bm z\rangle}\sum_{k_2\geq0}e^{k_2\langle \bm w_2,\bm z\rangle}\dots\sum_{k_d\geq0}e^{k_d\langle \bm w_d,\bm z\rangle},
        \end{align*}
        and a typical exponent is $\langle\bm p+k_1\bm w_1+\dots+k_d\bm w_d,\bm z\rangle$, which looks exactly like \eqref{eq: generic lattice vector in a half-open cone}.
\end{proof}

\bigskip
\begin{proof}
\hypertarget{proof of lemma: integer point transform of a partially-open vertex tangent cone} (of Lemma \ref{lemma:integer point transform of a partially-open vertex tangent cone})
At each vertex of $\Pi$, the  half-open  parallelepiped is generated by the same \textbf{primitive} generating vectors of $\Pi$, up to a choice of sign. The remaining proof is identical to the proof of Lemma  \ref{lemma: integer point transform of the half-open cone}.
\end{proof}

\begin{proof}
\hypertarget{proof of proposition: prop:finite-difference-Bernoulli-Barnes} (of Proposition \ref{prop:finite-difference-Bernoulli-Barnes})
We recall the definition of the Barnes polynomials via their generating function:
\begin{equation}
    \frac{x^d e^{ux}}
{\prod_{j=1}^d(e^{a_j x}-1)}
=
\sum_{k\geq 0}B_k(u,\va)\frac{x^k}{k!}.
\end{equation}

Substituting $u=ta_I$ gives
\begin{equation}
   \frac{x^d e^{ta_I x}}
{\prod_{j=1}^d(e^{a_j x}-1)}
=
\sum_{k\geq 0}B_k(ta_I,\va)\frac{x^k}{k!}. 
\end{equation}
Multiplying the latter equation by $(-1)^{d-|I|}$ and summing over all
subsets $I\subseteq[d]$, we obtain
\begin{equation}
\label{step before cancellation}
\sum_{I\subseteq[d]}
(-1)^{d-|I|}
\frac{x^d e^{ta_I x}}
{\prod_{j=1}^d(e^{a_j x}-1)}
=
\sum_{k\geq 0}
\left(
\sum_{I\subseteq[d]}
(-1)^{d-|I|}
B_k(ta_I,\va)
\right)
\frac{x^k}{k!}.
\end{equation}
The left-hand side of 
\eqref{step before cancellation} is:

\begin{align}
\frac{x^d}
{\prod_{j=1}^d(e^{a_j x}-1)}
\sum_{I\subseteq[d]}
(-1)^{d-|I|}e^{ta_I x}
&=
\frac{x^d}
{\prod_{j=1}^d(e^{a_j x}-1)}
\prod_{j=1}^d(e^{ta_j x}-1)
= x^dt^d+O(x^{d+1}).
\end{align}
The latter equality follows from
\begin{align*}
    \frac{e^{ta_jx}-1}{e^{a_jx}-1}&=t+O(x)
 \    \implies \ 
 \prod_{j=1}^d\frac{e^{ta_jx}-1}{e^{a_jx}-1}
 =
 t^d+O(x).
\end{align*}
Therefore, \eqref{step before cancellation} gives us the identity
\begin{equation}
\label{formal power series}
x^dt^d+O(x^{d+1}) = \sum_{k\geq 0}
\left(
\sum_{I\subseteq[d]}
(-1)^{d-|I|}
B_k(ta_I,\va)
\right)
\frac{x^k}{k!}.
\end{equation}
Hence the coefficients for $k=0,\dots,d-1$ must vanish:
\begin{equation}
    \sum_{I\subseteq[d]}(-1)^{d-|I|}B_k(ta_I,\va)=0,
\end{equation}
for $k=0,\dots,d-1.$
Moreover, the coefficient of $x^d$ is exactly $t^d$, yielding
\begin{equation}
    \sum_{I\subseteq[d]}(-1)^{d-|I|}B_d(ta_I,\va)=d!t^d.
\end{equation}

\noindent
Finally, for the special case $t=1$ and $k>d$, we use the exact cancellation
\begin{equation}
\label{eq:exact-cancellation-t-equals-one}
\frac{x^d}
{\prod_{j=1}^d(e^{a_jx}-1)}
\prod_{j=1}^d(e^{a_jx}-1)
=
x^d.
\end{equation}
Thus, when $t=1$, equation \eqref{step before cancellation} becomes
\begin{equation}
\label{eq:t-equals-one-generating-function}
\sum_{k\geq 0}
\left(
\sum_{I\subseteq[d]}
(-1)^{d-|I|}
B_k(a_I,\va)
\right)
\frac{x^k}{k!}
=
x^d.
\end{equation}
Therefore every coefficient except the coefficient of $x^d$ vanishes and we get:
\begin{equation}
\label{eq:t-equals-one-coefficients-vanish}
\sum_{I\subseteq[d]}
(-1)^{d-|I|}
B_k(a_I,\va)
=
0,
\qquad
k\neq d.
\end{equation}
In particular, for $k>d$,
\begin{equation}
\label{eq:t-equals-one-high-coefficients-vanish}
\sum_{I\subseteq[d]}
(-1)^{d-|I|}
B_k(a_I,\va)
=
0.
\end{equation}
\end{proof}

\begin{proof}
\hypertarget{proof of theorem: thm: moments of dilated half-open parallelepipeds}
(of Theorem \ref{thm: moments of dilated half-open parallelepipeds})
We begin with the discrete Brion theorem for the half-open parallelepiped:
\begin{equation}
\label{eq: Brion for half-open parallelepiped blue}
\sigma_{t\Pi}(\bm z)
=
\sum_{I\subseteq[d]}
\sigma_{t\bm v_I+\K_{\bm v_I}^{\#}}(\bm z),
\end{equation}
an identity valid for almost all $\bm z\in\C^d$ and all rational $t>0$.

\noindent
{\bf Step $1$}.
Fix a subset $I\subseteq[d]$. By
\eqref{eq: integer point transform of vertex tangent half-open cone}, the
integer point transform of the vertex tangent cone at $t\bm v_I$ is
\begin{equation}
\label{eq: integer point transform of a real dilated half-open cone blue}
\sigma_{t\bm v_I+\K_{\bm v_I}^{\#}}(\bm z)
=
\frac{
\sigma_{t\bm v_I+\Pi_{\bm v_I}^{\#}}(\bm z)
}{
\prod_{j\in I^c}
\left(1-e^{\langle \bm w_j,\bm z\rangle}\right)
\prod_{i\in I}
\left(1-e^{-\langle \bm w_i,\bm z\rangle}\right)
}.
\end{equation}
We now substitute $\bm z=x\bm z_0$, where $\bm z_0$ is generic, and let
$\va :=
\bigl(
\langle \bm w_1,\bm z_0\rangle,
\ldots,
\langle \bm w_d,\bm z_0\rangle
\bigr)^T$. 
Since
\begin{equation}
\label{eq:denominator-sign-change}
\prod_{j\in I^c}
\left(1-e^{x\langle \bm w_j,\bm z_0\rangle}\right)
\prod_{i\in I}
\left(1-e^{-x\langle \bm w_i,\bm z_0\rangle}\right)
=
(-1)^{d-|I|}
e^{-x\langle \bm v_I,\bm z_0\rangle}
\prod_{r=1}^d
\left(
e^{x\langle \bm w_r,\bm z_0\rangle}-1
\right),
\end{equation}
we obtain
\begin{align}
\sigma_{t\bm v_I+\K_{\bm v_I}^{\#}}(x\bm z_0)
&=
(-1)^{d-|I|}
e^{x\langle \bm v_I,\bm z_0\rangle}
\sigma_{t\bm v_I+\Pi_{\bm v_I}^{\#}}(x\bm z_0)
\frac{1}{
\prod_{r=1}^d
\left(
e^{x\langle \bm w_r,\bm z_0\rangle}-1
\right)
}
\nonumber\\
&=
(-1)^{d-|I|}
e^{x\langle(1-t)\bm v_I,\bm z_0\rangle}
\sigma_{t\bm v_I+\Pi_{\bm v_I}^{\#}}(x\bm z_0)
\sum_{k\geq 0}
B_k
\bigl(
t\langle \bm v_I,\bm z_0\rangle,\va
\bigr)
\frac{x^{k-d}}{k!}.
\label{eq:correct-local-cone-expansion}
\end{align}
We note an important subtlety here - we  deliberately kept the translate
$\Pi_{\bm v_I}^{\#}+t\bm v_I$. This point is important because one cannot replace
$\sigma_{\Pi_{\bm v_I}^{\#}+t\bm v_I}(x\bm z_0)$ by an expression involving
$\sigma_{\Pi_{\bm v_I}^{\#}+\bm v_I}(x\bm z_0)$ unless
\begin{equation}
\label{eq:integral-translation-condition-proof}
(t-1)\bm v_I\in\Z^d.
\end{equation}
Indeed, translation by a \emph{nonintegral} vector changes the set of lattice
points, and therefore does not merely multiply the integer point transform
by an exponential factor. The correct local factor is therefore precisely
\begin{equation}
\label{eq:correct-local-factor-proof}
e^{x\langle(1-t)\bm v_I,\bm z_0\rangle}
\sigma_{t\bm v_I+\Pi_{\bm v_I}^{\#}}(x\bm z_0).
\end{equation}

\noindent\textbf{Step $2$.}
We now rewrite this local factor 
\eqref{eq:correct-local-factor-proof}
as a lattice-flow sum over the fixed
half-open parallelepiped $\Pi$. Since
\begin{equation}
\label{eq:vertex-parallelepiped-translate}
\Pi_{\bm v_I}^{\#}
=
\Pi-\bm v_I,
\end{equation}
we have
\begin{equation}
\label{eq:translated-vertex-parallelepiped}
t\bm v_I+\Pi_{\bm v_I}^{\#}
=
\Pi+(t-1)\bm v_I.
\end{equation}
Therefore
\begin{align}
e^{x\langle(1-t)\bm v_I,\bm z_0\rangle}
\sigma_{t\bm v_I+\Pi_{\bm v_I}^{\#}}(x\bm z_0)
&=
\sum_{\bm p\in
(t\bm v_I+\Pi_{\bm v_I}^{\#})\cap\Z^d}
e^{x\langle \bm p-(t-1)\bm v_I,\bm z_0\rangle}
\nonumber\\
&=
\sum_{\bm q\in
\Z^d-(t-1)\bm v_I
\;(\hspace{-.1cm}\bmod \Pi)}
e^{x\langle \bm q,\bm z_0\rangle}.
\label{eq:lattice-flow-local-factor}
\end{align}
Substituting \eqref{eq:lattice-flow-local-factor} into
\eqref{eq:correct-local-cone-expansion}, and then summing over all vertices, we have:
\begin{align}
\sigma_{t\Pi}(x\bm z_0)
&=
\sum_{I\subseteq[d]}
(-1)^{d-|I|}
\sum_{\bm q\in
\Z^d-(t-1)\bm v_I
\  (  \hspace{-.1cm}   \bmod \Pi)}
e^{x\langle \bm q,\bm z_0\rangle}
\sum_{k\geq 0}
B_k
\bigl(
t\langle \bm v_I,\bm z_0\rangle,\va
\bigr)
\frac{x^{k-d}}{k!}
\nonumber\\
&=
\sum_{I\subseteq[d]}
\sum_{\bm q\in
\Z^d-(t-1)\bm v_I
\;(\hspace{-.1cm}\bmod \Pi)}
\sum_{n,k\geq 0}
(-1)^{d-|I|}
\langle \bm q,\bm z_0\rangle^n
B_k
\bigl(
t\langle \bm v_I,\bm z_0\rangle,\va
\bigr)
\frac{x^{n+k-d}}{n!k!}.
\label{eq:final-sum-corrected-proof}
\end{align}

\noindent\textbf{Step $3$.}
 On the other hand,
\begin{equation}
\label{eq:left-side-moment-generating-function}
\sigma_{t\Pi}(x\bm z_0)
=
\sum_{\bm p\in t\Pi\cap\Z^d}
e^{x\langle \bm p,\bm z_0\rangle}
=
\sum_{m\geq 0}
\left(
\sum_{\bm p\in t\Pi\cap\Z^d}
\langle \bm p,\bm z_0\rangle^m
\right)
\frac{x^m}{m!}.
\end{equation}
Comparing the coefficient of $x^m$ in
\eqref{eq:left-side-moment-generating-function} and
\eqref{eq:final-sum-corrected-proof}, with $n+k-d=m$, gives us
\begin{align}
\frac{1}{m!}
\sum_{\bm p\in t\Pi\cap\Z^d}
\langle \bm p,\bm z_0\rangle^m
&=
\sum_{I\subseteq[d]}
\sum_{k=0}^{d+m}
\frac{(-1)^{d-|I|}}{k!(d+m-k)!}
B_k
\bigl(
t\langle \bm v_I,\bm z_0\rangle,\va
\bigr)
\nonumber\\
&\qquad\qquad\qquad\qquad
\times
\sum_{\bm q\in
\Z^d-(t-1)\bm v_I
\;(\hspace{-.1cm}\bmod \Pi)}
\langle \bm q,\bm z_0\rangle^{d+m-k}.
\label{eq:coefficient-comparison-proof}
\end{align}
By the definition of the polytope Dedekind sums, the inner sum is
\begin{equation}
\label{eq:inner-sum-is-dedekind-moment}
\mu_{d+m-k}
\bigl(
\Pi,\bm z_0,(t-1)\bm v_I
\bigr).
\end{equation}
Hence
\begin{align}
\sum_{\bm p\in t\Pi\cap\Z^d}
\langle \bm p,\bm z_0\rangle^m
&=
\frac{m!(-1)^d}{(d+m)!}
\sum_{I\subseteq[d]}
\sum_{k=0}^{d+m}
\binom{d+m}{k}
(-1)^{|I|}
B_k
\bigl(
t\langle \bm v_I,\bm z_0\rangle,\va
\bigr)
\nonumber\\
&\qquad\qquad\qquad\qquad
\times
\mu_{d+m-k}
\bigl(
\Pi,\bm z_0,(t-1)\bm v_I
\bigr).
\label{eq:discrete-moment-of-half-open-parallelepiped-corrected}
\end{align}
We notice that both sides of 
\eqref{eq:discrete-moment-of-half-open-parallelepiped-corrected} may now be easily extended to all positive real $t$, because they are both piecewise analytic functions of $t>0$ which already agree for all rational positive $t$. 
This proves the desired moment formula. Finally, setting $m=0$ gives the
Ehrhart quasi-polynomial formula in part
\eqref{cor:Ehrhart quasi-polynomial of a half-open integer parallelepiped}.
\end{proof}

\section{Further remarks}
\label{sec: further remarks}
The following conjecture stands in sharp contrast to the complete period-collapse phenomena that occurs in the setting of closed rational polytopes.  In the case of integral half-open parallelepipeds $\Pi \subset \R^d$, it follows from Theorem \ref{thm:partial-alternating-differences} part \ref{part c of Ehrhart coefficients}
that their quasi-coefficients all have a period of $1$. This suggests the study of the smallest rational periods smaller than $1$.

\begin{conjecture}[Generic moment-period version]
For \(m\ge 0\), define
\[
\sum_{\bm p\in t\Pi\cap\mathbb Z^d}\langle \bm p,\bm z\rangle^m 
=
\sum_{r=0}^{d+m} c_{m,r}(t;\bm z)t^r, 
\]
the real quasi-polynomial expansion
obtained from the Barnes-polynomial and polytope Dedekind sum formula of
Theorem \ref{thm: moments of dilated half-open parallelepipeds}. Then:

\begin{enumerate}[(a)]
\item Outside a proper algebraic subset of choices of
\(\bm z\in\mathbb C^d\), the  coefficient functions
\(c_{m,r}(t;\bm z)\) have common smallest period equal to \(1\).  In other words, there is no complete period collapse.

\item  What is the smallest rational period of each coefficient $c_{m,r}(t;\bm z)$?
\end{enumerate}
\hfill 
\scalebox{.8}{$\smallpolytope$}
\end{conjecture}

\bigskip

\begin{remark}
The  observation that the left-hand side of \eqref{eq:final-sum-corrected-proof} is analytic yields vanishing identities for the polytope Dedekind sums, arising from the vanishing of the principal part of the Laurent series in the right-hand side of equation \eqref{eq:final-sum-corrected-proof}. These identities may be interpreted as a recursion relation for the polytope Dedekind sums. Indeed, for each $0\leq m \leq d-1$, and all $t>0$, we have the following vanishing identities for the polytope Dedekind sums:
    \begin{equation}
    \label{eq: vanishing identities for Polytope Dedekind Sums}
        0=\sum_{k=0}^m\binom{m}{k}\sum_{I\subseteq[d]}(-1)^{|I|}B_k(t\langle \bm v_I,\bm z\rangle,\va)
        \mu_{m-k}(\Pi,\bm z,(t-1)\bm v_I).
    \end{equation}

Equivalently, for $1\le m \le d-1$, we have
\begin{align}
\label{eq: equivalent statement for recursion of polytope dedekind sums}
&\frac{1}{\langle \bm w_1,\bm z\rangle\cdots\langle \bm w_d,\bm z\rangle} 
\sum_{I\subseteq[d]}
(-1)^{|I|}
\mu_{m}\big(
\Pi,\bm z,(t-1)\bm v_I
\big)  \notag\\
&=
\sum_{k=1}^m\binom{m}{k}\sum_{I\subseteq[d]}(-1)^{|I|+1}
B_k\big(t
\langle \bm v_I,\bm z\rangle,\va
\big)
\, \mu_{m-k}
\big(
\Pi,\bm z,(t-1)\bm v_I
\big).
\end{align}
\hfill 
\scalebox{.8}{$\smallpolytope$}
\end{remark}

\begin{remark}
Unlike \(\bm z_S\) for a proper subset \(S\subsetneq[d]\), the vector
\(\bm z_{[d]}\) is generic, because
\[
\langle \bm w_j,\bm z_{[d]}\rangle=1\notin2\pi i\mathbb Z
\]
for every \(j\).
Moreover, the constant coefficient measures precisely the failure of
the alternating sum in Theorem~1 to continue vanishing at the
critical degree \(k=d\):
\begin{equation}
\sum_{I\subseteq[d]}
(-1)^{|I|}
\mu_d(\Pi,\bm z,t\bm v_I)
=
(-1)^d d!
\left(\prod_{j=1}^{d}\langle \bm w_j,\bm z\rangle\right)c_0(t).
\end{equation}
\hfill
\scalebox{.8}{$\smallpolytope$}
\end{remark}

\begin{remark}
    We note, without proof, that an easy adaptation of the proof of Theorem \ref{thm: moments of dilated half-open parallelepipeds} yields a second derivation of Lanphier's multivariable Faulhaber-type formula \cite{Lanphier}. Indeed, let $\Pi=[0,n_1)\times\dots\times[0,n_d)\subset\R^d$ be a half-open rectangular box. Fix $m\in\Z_{\geq 0}, \, t\in\Z_{>0}$, and $\bm z=(z_1,\dots,z_d)\in\C^d$, with $z_i\neq0, \ i=1,\dots,d$.

    For the rectangular box $\Pi$, every vertex half-open parallelepiped is a half-open unit cube, whose polytope Dedekind sum is trivial. 
 After the smoke clears,  we are left with the following alternating sum:
\begin{equation}
\label{eq: discrete moment of smooth parallelepiped}
\sum_{\bm p\in t\Pi\cap\Z^d}
\langle \bm p,\bm z\rangle^m
=
\frac{m!}{(d+m)!}
\sum_{I\subseteq[d]}
(-1)^{d-|I|}
B_{d+m}
\bigl(t\langle \bm n_I,\bm z\rangle,\bm z\bigr),
\end{equation}
where $\bm n_I\coloneq \sum_{i\in I}n_i\bm e_i$.
For the sake of completeness, we also present a new  elementary proof of identity
\eqref{eq: discrete moment of smooth parallelepiped}:
\begin{align}
\sum_{j\ge d}
\left(
\sum_{\bm k\in t\Pi\cap\Z^d}
\langle \bm k,\bm z\rangle^{j-d}
\right)
\frac{x^j}{(j-d)!}
&=
x^d
\sum_{\bm k\in t\Pi\cap\Z^d}
e^{x\langle \bm k,\bm z\rangle} \nonumber\\
&=
x^d
\prod_{i=1}^d
\left(
\sum_{k_i=0}^{tn_i-1} e^{xk_i z_i}
\right) \nonumber\\
&=
x^d
\prod_{i=1}^d
\frac{e^{xtn_i z_i}-1}{e^{xz_i}-1} \nonumber\\
&=
\sum_{I\subseteq[d]}
(-1)^{d-|I|}
\frac{x^d e^{xt\langle\bm n_I,\bm z\rangle}}
{\prod_{i=1}^d(e^{xz_i}-1)} \nonumber\\
&=
\sum_{l\ge 0}
\sum_{I\subseteq[d]}
(-1)^{d-|I|}
B_l(t\langle\bm n_I,\bm z\rangle,\bm z)
\frac{x^l}{l!}.
\label{eq: Bernoulli series expansion for nth discrete moment}
\end{align}
Equating the coefficient of $x^{n+d}$ gives identity \eqref{eq: discrete moment of smooth parallelepiped}.
    \hfill\scalebox{.8}{$\smallpolytope$}
\end{remark}

\bigskip
\noindent
{\bf AI Usage}

\medskip
\noindent
The authors used ChatGPT for editorial assistance, proofreading, and suggestions during the writing of this paper.  The authors independently provided and re-checked all
mathematical arguments, statements, and references, and take full responsibility for the final manuscript.
 
%%%%%%%%%%%%%%%%%%%%%%%

\bibliographystyle{alpha}
\bibliography{Reference}

@article{BeckSamWoods,
    author={Beck, Matthias and Sam, Steven and Woods, Kevin},
    title={Maximal periods of ({Ehrhart}) quasi-polynomials},
    journal={Journal of Combinatorial Theory, Series A},
    volume = {115},
    number = {3},
    pages = {517--525},
    year = {2008},
}

@article{Brion,
    author={Brion, Michel},
    title={Points entiers dans les polyèdres convexes},
    journal={Annales scientifiques de l’É.N.S},
    volume = {21},
    number = {4},
    pages = {653--663},
    year = {1988},
}

@article{Ehrhart,
    author={Ehrhart, Eugène},
    title={Sur les polyèdres rationnels homothétiques à n dimensions},
    journal={C. R. Acad. Sci. Paris},
    volume = {254},
    pages = {616--618},
    year = {1962}
}

@article{GunnelsSczech,
    title = {Evaluation of {Dedekind} sums, {Eisenstein} cocycles and special values of {L}-functions},
    journal = {Duke Math. J.},
    volume = {118},
    pages = {229--260},
    year = {2003},
    number={2},
    author = {Gunnells, Paul and Sczech, Robert}
}

@article{Lanphier,
    title = {Multivariable {Faulhaber}-type formulas and {Barnes} zeta functions},
    journal = {Journal of Mathematical Analysis and Applications},
    volume = {506},
    number = {2},
    pages = {1--13},
    year = {2022},
    author = {Lanphier, Dominic}
}

@article{Stanley1974,
    author ={Stanley, Richard P.},
    title ={Combinatorial reciprocity theorems},
    journal ={Advances in Mathematics},
    year = {1974},
    pages={194--253},
    volume={14},
    number={2}
}

@book{Barvinok,
  title={Integer Points in Polyhedra},
  author={Barvinok, Alexander},
  year={2008},
  publisher={European Mathematical Society},
  series={Zurich Lectures in Advanced Mathematics}
}

@book{CCD,
  title={Computing the continuous discretely},
  author={Beck, Matthias and Robins, Sinai},
  year={2015},
  publisher={Springer},
  series={Undergraduate Texts in Mathematics},
  edition={Second}
}

@book{Robinsbook,
  title={Fourier Analysis on Polytopes and the Geometry of Numbers, Part I: A friendly introduction},
  author={Robins, Sinai},
  year={2024},
  publisher={American Mathematical Society},
  series={Student Mathematical Library}
}

@book{Stanley,
  title={Enumerative combinatorics},
  edition={Second},
  author={Stanley, Richard P.},
  publisher={Cambridge University Press},
  year={2011}
}

@misc{Robins2026,
  title={Ehrhart quasi-polynomials via {Barnes} polynomials and
  discrete moments of parallelepipeds (preprint)},
  author={Robins, Sinai},
  year={2026},
  eprint={2601.12596},
  archivePrefix={arXiv},
  primaryClass={math.CO},
  url={https://arxiv.org/abs/2601.12596}
}

@article{KidwaiOsuga2025,
  author  = {Omar Kidwai and Kento Osuga},
  title   = {Refined BPS Structures and Topological Recursion---the Weber and Whittaker Curves},
  journal = {International Mathematics Research Notices},
  year    = {2025},
  doi     = {10.1093/imrn/rnaf116}
}
\section*{Authors' addresses}

\noindent
\textbf{Sinai Robins}\\
Universidade de São Paulo, Instituto de Matemática e Estatística,
São Paulo, SP, Brazil\\
\texttt{sinai.robins@gmail.com}

\vspace{1em}

\noindent
\textbf{André Rosenbaum Coelho}\\
Universidade de São Paulo, Instituto de Matemática e Estatística,
São Paulo, SP, Brazil\\
\texttt{andrerosenbaumcoelho@usp.br}
\end{document}